\definecolor{dblue}{rgb}{0,0,.6}
\renewcommand*{\backrefalt}[4]{
	\ifcase #1 (Not cited)
	\or        (Cited on page~#2)
	\else      (Cited on pages~#2)
	\fi}
\newcommand{\calo}{\mathcal{O}}
\newcommand{\bbA}{\mathbb{A}}
\newcommand{\bbZ}{\mathbb{Z}}
\newcommand{\bbQ}{\mathbb{Q}}
\newcommand{\bbR}{\mathbb{R}}
\newcommand{\bbC}{\mathbb{C}}
\newcommand{\bbP}{\mathbb{P}}
\newcommand{\DD}{\mathcal{D}}
\newcommand{\HH}{\mathcal{H}}
\newcommand{\LL}{\mathcal{L}}
\newcommand{\OO}{\mathcal{O}}
\newcommand{\VV}{\mathcal{V}}
\newcommand{\XX}{\mathcal{X}}
\newcommand{\YY}{\mathcal{Y}}
\newcommand{\ZZ}{\mathcal{Z}}
\newcommand{\st}{\enskip |\enskip}
\newcommand{\cnv}{\,\lrcorner\,\,}
\newcommand{\emrp}{\mathrm{End}}
\newcommand{\dd}{\partial}
\newcommand{\ddb}{\overline\partial}
\newcommand{\ii}{i}
\newcommand{\wdg}{\wedge}
\newcommand{\lrarr}{\longrightarrow}
\newcommand{\hrarr}{\hookrightarrow}
\newtheorem{defn}{Definition}[section]
\newtheorem{thm}[defn]{Theorem}
\newtheorem{lem}[defn]{Lemma}
\newtheorem{cor}[defn]{Corollary}
\theoremstyle{remark}
  \newtheorem{rem}[defn]{Remark}
\newcommand{\version}{Version 1.3, \today}
\title{The Moser isotopy for holomorphic symplectic and C-symplectic structures}
\author{Andrey Soldatenkov}
\address{Steklov Mathematical Institute of Russian Academy of Sciences\\
8 Gubkina Street\\ Moscow 119991\\ Russia}
\email{soldatenkov@mi-ras.ru}
\thanks{The work of Andrey Soldatenkov was performed
at the Steklov International Mathematical Center and supported
by the Ministry of Science and Higher Education of the Russian Federation
(agreement no. 075-15-2019-1614)}
\author{Misha Verbitsky}
\address{Instituto Nacional de Matem\'atica Pura e Aplicada\\
Estrada Dona Castorina, 110\\
Jardim Bot\^anico, CEP 22460-320\\
Rio de Janeiro, RJ - Brasil}
\address{Laboratory of Algebraic Geometry\\
National Research University HSE\\
Department of Mathematics, 6 Usacheva Street\\ Moscow, Russia
}
\email{verbit@impa.br}
\thanks{Misha Verbitsky is
partially supported by the HSE University Basic Research Program,
by FAPERJ grant E-26/202.912/2018 
and CNPq - Process 313608/2017-2.}
\begin{document}

\begin{abstract}
A C-symplectic structure is a complex-valued 
2-form which is holomorphically symplectic for
an appropriate complex structure.
We prove an analogue of Moser's isotopy theorem for families of
C-symplectic structures and list several applications of
this result. We prove that the degenerate twistorial
deformation associated to a holomorphic Lagrangian
fibration is locally trivial over the base of this
fibration. This is used to extend several theorems about Lagrangian
fibrations, known for projective hyperk\"ahler manifolds,
to the non-projective case. We also exhibit new examples
of non-compact complex manifolds with infinitely many pairwise
non-birational algebraic compactifications. 
\end{abstract}

\maketitle
\fancypagestyle{plain}{
  \fancyhf{}
  \cfoot{\small -- \thepage \ --} \rfoot{\tiny \sc\version}
  \renewcommand{\headrulewidth}{0pt}%
}
\thispagestyle{plain}

{\small
\tableofcontents
}

\section{Introduction}

%\subsection{The Moser isotopy and its applications}

Let $X$ be a complex manifold.
Recall that a holomorphic 2-form $\Omega\in H^0(X,\Omega^2_X)$
is symplectic if it is closed and non-degenerate
at every point of $X$. Viewing $\Omega$ as
a section of the bundle $\Lambda^{2,0}X$,
we note that one can uniquely recover the complex structure of $X$
from the form $\Omega$. Namely, the subbundle $T^{0,1}X$ is
the kernel of $\Omega$ and $T^{1,0}X$ is its complex conjugate.
We see that the complex 2-form $\Omega$ alone can be used
to encode both the complex structure and the symplectic structure
of $X$. Abstracting the properties of $\Omega$ that are necessary
to reconstruct the complex structure, we arrive at the 
notion of a C-symplectic form, see Definition \ref{def_C_sympl}.

A similar point of view on holomorphic symplectic structures
appeared in the work of Hitchin \cite{Hi1} and was further
explored in \cite{V2} and \cite{BDV}. One advantage of this
point of view is that we can describe
deformations of both the complex and symplectic structures on $X$
simultaneously.

Our aim is to investigate the properties
of C-symplectic structures further. Recall that the ordinary real symplectic
structures admit a simple local description. Namely, by
Darboux theorem, any real symplectic 
manifold $M$ is locally symplectomorphic to an open subset in a vector
space with the standard symplectic form. Moreover, any compact Lagrangian
submanifold $N\subset M$ admits a neighbourhood that is symplectomorphic
to a neighbourhood of the zero section in $T^*N$ with its canonical symplectic
form, see e.g. \cite[section 3]{MS}. To prove 
these statements one can use an idea that
goes back to Moser \cite{Mo} and construct an isotopy that maps
one symplectic form to another.

In the present paper, we prove some analogues of Moser's isotopy theorem
for C-symplectic structures (section \ref{sec_Moser})
and discuss their applications (section \ref{sec_appl}). It turns
out that it is useful to apply Moser's isotopy to holomorphic Lagrangian
fibrations. Given such a fibration $\pi\colon X\to S$ over a projective base $S$, we consider a degenerate
twistorial deformation of $X$ (introduced in \cite{V2}), 
which is a family of holomorphic symplectic
manifolds parametrized by $\bbC$ (see section \ref{sec_Lag} for details).
Such families have been classically studied in the case when $X$ is a K3
surface, see e.g. the discussion in \cite{Hu2}, where they are called
Brauer families. In \cite{Mar} these families
are studied in the case when $X$ is of $\mbox{K3}^{[n]}$-type. 

Our main observation is the following.
Given a hyperplane section $D\subset S$, we consider its open complement $U\subset S$.
If $\XX_t$ is the fibre of the degenerate twistorial deformation of $X$
over the point $t\in\bbC$,
it still admits a holomorphic Lagrangian fibration over $S$. Let $\XX_{U,t}$
be the preimage of $U$ in $\XX_t$. We prove in Theorem \ref{thm_main} that $\XX_{U,t}$
are isomorphic as complex manifolds for all $t\in \bbC$.
This has several implications.

First, we construct examples of complex
manifolds that admit infinitely many structures of a quasi-projective variety
that are pairwise non-birational, see Corollary \ref{cor_1}. Examples of complex
manifolds that admit several non-isomorphic algebraic structures are well known,
see e.g. \cite[Chapter VI, \S 3]{Ha} and \cite{Je}.
% has constructed
%examples of complex manifolds that admit uncountably many pairwise non-isomorphic
%algebraic structures. The examples in \cite{Je} are total spaces of line bundles
%over the same affine curve, hence they are pairwise birational. Our examples are pairwise
%non-birational and in this sense stronger than in \cite{Je}. 
One may also compare our examples with those arising from non-abelian
Hodge theory of Hitchin--Simpson. Recall that de Rham and Betti moduli spaces
are isomorphic as complex analytic varieties (the Riemann--Hilbert
correspondence), but not as algebraic varieties, see e.g. \cite[Proposition 9]{Si}.
%Lagrangian fibrations also
%show up in that picture if one passes to the Dolbeault moduli space.

Next, we observe that any fibre of a holomorphic Lagrangian fibration
admits a Zariski-open neighbourhood that has a structure of a quasi-projective
variety, see Corollary \ref{cor_2}. In particular, any Lagrangian fibration is a Zariski-locally
projective morphism. This strengthens a result of Campana \cite{Ca} who
proved analogous statement in the analytic topology. Using the same idea,
we show in section \ref{sec_Stein} that any hyperk\"ahler manifold admitting
a Lagrangian fibration contains an open dense subset that is Stein. This
is related to a question of Bogomolov about the existence of ``Stein cells''
in compact complex manifolds, see \cite{B2}.
We use quasi-projective neighbourhoods to generalize to the non-projective setting the results
of Matsushita \cite{Ma3} about higher direct images of the structure sheaf,
see Corollary \ref{cor_3}. This shows how to obtain Matshushita's theorem in full
generality and in elementary way, without using more complicated techniques of Saito \cite{Sa}.

In section \ref{sec_contr} we consider a holomorphic symplectic manifold that
admits a bimeromorphic contraction such that the contraction centre is
mapped to a point. We use Moser's isotopies to show that such contraction
centres are rigid. More precisely, in a family of such contractions
the contraction centres of nearby fibres are isomorphic, and moreover the isomorphisms
are induced by symplectomorphisms of open neighbourhoods of the contraction centres,
see Theorem \ref{thm_contr}.

The Moser isotopy method which we describe
can be considered as an analytic version of the
work \cite{_KV:period_}, which deals with
formal deformations of holomorphic
symplectic manifolds, not necessarily compact. 
In  \cite{_KV:period_} it was shown that any 
deformation of a holomorphic symplectic
manifold with $H^{>0}(X, \calo_X)=0$
is trivial in the formal category, as
long as the de Rham cohomology class of its 
holomorphic symplectic form does not change.
This theorem, which has many useful applications, 
is in many aspects deficient. Indeed, a formal 
deformation of a Stein manifold or an affine manifold
is always trivial.
Using the Moser isotopy for
C-symplectic structures, we can
prove a local analytic version of this formal deformation
theorem, Corollary \ref{cor_Moser_compact}.

%%%%%%%%%%%%%%%%%%%%%%%%%%%%%%%%%%%%%%%%%%%%%%%%%%%%%%%%%%%%
%\begin{prop}\label{_Moser_on_compact_subsets_Proposition_}
%Let $\pi:\; \tilde M \arrow B$ be
%a smooth deformation of holomorphic
%symplectic manifolds (not necessarily compact),
%locally trivial in smooth category.
%We use notation $M_b:= \pi^{-1}(b)$ for a
%a fiber, and $\Omega_b\in \Omega^2(M_b)$ for its holomorphic
%symplectic form. Suppose that the cohomology
%class of $\Omega_b$ is locally constant in $b\in B$,
%and $H^1(\calo_{M_b})=0$. 
%Let $K \subset M_{b_0}$ be an compact subset in a
%fiber. Then there exists an open neighbourhood
%$U\subset B$  of $b_0\in B$, and an open subset 
%${\goth U} \subset \pi^{-1}(U)$, with $K\subset {\goth
%  U}$, with the following property. The set
%${\goth U}$ is locally trivially
%fibered over $U$, with all fibers ${\goth U} \cap \pi^{-1}(b)$,
%$b\in U$ isomorphic as holomorphic symplectic
%varieties.
%\end{prop}

%\begin{proof} Immediately follows from Theorem \ref{thm_Moser_I} below.
%\end{proof}

\section{C-symplectic structures}

\subsection{Definition and main properties}

We recall the definition of a C-symplectic structure from \cite{BDV}.

\begin{defn}\label{def_C_sympl}
Let $X$ be a real $4n$-dimensional $C^\infty$-manifold.
A C-symplectic form on $X$ is a smooth section
$\Omega$ of the complexified bundle of 2-forms $\Lambda^2_\bbC X$
that satisfies three conditions:
\begin{enumerate}
\item $d\Omega = 0$,
\item $\Omega^{n+1} = 0$,
\item $\Omega^n\wedge\overline{\Omega}\mathstrut^n \neq 0$ pointwise on $X$.
\end{enumerate}
\end{defn}

Decompose a C-symplectic form into its real and imaginary parts:
$\Omega = \omega_1 +\ii\omega_2$. Recall the following properties
(see \cite{BDV}).

\begin{enumerate}
\item Both $\omega_1$ and $\omega_2$ are real symplectic forms.
\item The kernel of $\Omega$ has rank $2n$ at all points of $X$.
Let the subbundle $T^{0,1}X \subset T_{\bbC}X$ be the
kernel of $\Omega$ and $T^{1,0}X$ its complex conjugate. Then
$T_{\bbC}X = T^{1,0}X\oplus T^{0,1}X$.
\item The subbundle $T^{1,0}X$ is closed under the Lie bracket.
This defines an integrable almost-complex structure on $X$. Therefore
any manifold with a C-symplectic form admits an intrinsically defined
complex structure.
\item The complex structure operator $I\in \emrp(TX)$ can be
expressed as follows: $I = \omega_2^{-1}\circ\,\omega_1$. Here we view
the 2-forms $\omega_j$ as morphisms $TX\to \Lambda^1 X$ defined by
contracting $\omega_j$ with the tangent vectors.
\item With respect to the above complex structure, the form $\Omega$
is holomorphically symplectic.
\end{enumerate}

\subsection{Lagrangian fibrations}\label{sec_Lag} We will be interested in certain families
of C-sym\-plectic structures that arise from holomorphic Lagrangian fibrations.
Let $X$ be a complex manifold of dimension $2n$ and $S$ a normal complex analytic
variety of dimension $n$.
Let $\pi\colon X\to S$ be a proper surjective holomorphic morphism with connected fibres.
Let $S^\circ \subset S$ be the maximal open subset over which the morphism $\pi$
is smooth. Recall that by Sard's theorem $S^\circ$ is non-empty and dense in $S$.
We denote by $\pi^\circ$ the restriction of $\pi$ to $X^\circ = \pi^{-1}(S^\circ)$.

Assume that $\Omega\in\Lambda^{2,0}X$ is a holomorphic symplectic form. 
We will call $\pi$ as above a {\it holomorphic Lagrangian fibration} if for any fibre $F$ of $\pi^\circ$
we have $\Omega|_F = 0$. Since the fibres are $n$-dimensional,
the latter condition means that $F$ is Lagrangian.
It is known that $F$ is a complex torus, see e.g. \cite[Theorem 2.6]{DM}.
%Since the normal bundle of $F$ is trivial, this condition implies that the tangent bundle of $F$
%is also trivial.
%Being a compact K\"ahler manifold with trivial holomorphic tangent bundle, $F$ is a compact complex torus.

\begin{rem} The base $S$ is smooth in all known examples of Lagrangian fibrations that are of
interest for us. However this condition is not
really necessary, so we do not include it in the definition.
We will need to use $C^\infty$ differential forms on $S$, and in the case of singular $S$
we define them as in \cite{Va}.
\end{rem}

Let $\eta\in \Lambda^{2,0}S\oplus \Lambda^{1,1}S$ be a closed 2-form.
For any $t\in \bbC$ define $\Omega_t = \Omega + t\pi^*\eta$.

\begin{thm}[\cite{BDV}]
In the above setting, the 2-forms $\Omega_t$ have the following properties.
\begin{enumerate}
\item $\Omega_t$ is a C-symplectic form on $X$ for any $t\in \bbC$.
\item If $I_t$ denotes the complex structure corresponding to $\Omega_t$,
then $\pi$ is a holomorphic Lagrangian fibration from $(X,I_t)$ to $S$ for any $t\in \bbC$.
\end{enumerate}
\end{thm}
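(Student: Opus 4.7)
The approach is to verify the three axioms of Definition \ref{def_C_sympl} for $\Omega_t$ and then check that $\pi$ is a holomorphic Lagrangian fibration with respect to the induced complex structure $I_t$. Closedness $d\Omega_t=0$ is immediate, since $d\Omega=0$ by assumption and $d\pi^*\eta=\pi^*d\eta=0$. For the remaining pointwise axioms I would work first on the dense open subset $X^\circ\subset X$ of smooth points of $\pi$. Near any such point, holomorphic Darboux--action--angle coordinates $(Z_1,\ldots,Z_n,w_1,\ldots,w_n)$ for $(X,I_0)$ are available, with the $Z_i$ pulled back from $S$ and $\Omega=\sum_i dw_i\wedge dZ_i$. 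Since $\eta$ has no $(0,2)$-component, the pullback $\pi^*\eta$ is a $\bbC$-linear combination of forms $dZ_i\wedge dZ_j$ and $dZ_i\wedge d\bar Z_j$; crucially, every summand contains at least one $dZ$-symbol, and no summand contains a $dw$ or $d\bar w$ symbol.

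The substantive work is then a pigeonhole count. Each 2-form summand of $\Omega_t$ contains a $dZ$-factor, so any wedge of $n+1$ such summands must repeat some $dZ_i$ and hence vanishes, proving $\Omega_t^{n+1}=0$. For non-degeneracy, observe that in $\Omega_t^n\wedge\overline{\Omega_t}^n$ the $dw_i$-symbols can appear only through the purely $\Omega$-factors of the expansion, and the $d\bar w_i$-symbols only through purely $\overline\Omega$-factors. A nonzero top form must exhibit all $n$ of the $dw_i$'s and all $n$ of the $d\bar w_i$'s, which forces the only surviving contribution to be $\Omega^n\wedge\overline{\Omega}^n$, nonvanishing by hypothesis on $\Omega$. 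Both identities are equalities of smooth forms that hold on the dense open set $X^\circ$, and they extend to all of $X$ by continuity.

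For part (2), Lagrangianity is straightforward: for a fibre $F$ of $\pi^\circ$ one has $\Omega|_F=0$ by assumption and $(\pi^*\eta)|_F=0$ because $\pi|_F$ is constant, so $\Omega_t|_F=0$. To see that $\pi$ is $I_t$-holomorphic I would use the intrinsic description $T^{0,1}_{I_t}X=\ker\Omega_t$ and verify $d\pi(\ker\Omega_t)\subset T^{0,1}S$. In the local coordinates above, writing a tangent vector as $v=\sum_i(A_i\partial_{Z_i}+B_i\partial_{w_i}+C_i\partial_{\bar Z_i}+D_i\partial_{\bar w_i})$, the coefficient of $dw_i$ in $i_v\Omega_t$ equals $-A_i$, since $\pi^*\eta$ has no $dw$-symbol; hence $A_i=0$ on $\ker\Omega_t$. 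For such $v$ we get $d\pi(v)=\sum_iC_i\partial_{\bar z_i}\in T^{0,1}S$, as required. The remaining equations coming from the $dZ_i$-coefficients of $i_v\Omega_t=0$ express the $B_i$ in terms of the $C_j$, leaving $C_i,D_i$ free and confirming $\dim\ker\Omega_t=2n$; non-smooth points of $\pi$ are again handled by the density of $X^\circ$ and the fact that complex-linearity of $d\pi$ is a closed condition.

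The main obstacle, in my view, is not any individual computation but the identification of a clean local normal form for $\Omega$ and $\pi^*\eta$ at smooth points of $\pi$. Once such adapted coordinates are in place, both the three C-symplectic axioms and the compatibility of $\pi$ with $I_t$ reduce to elementary bookkeeping of $dZ_i$, $dw_i$ and $d\bar w_i$ symbols, and the hypothesis $\eta\in\Lambda^{2,0}S\oplus\Lambda^{1,1}S$ enters exactly as the statement that every summand of $\pi^*\eta$ carries a $dZ$-factor, which is what makes the pigeonhole arguments work.
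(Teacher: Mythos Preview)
Your proposal is correct and follows essentially the same approach as the paper: work on the dense open set $X^\circ$, choose a frame adapted to the Lagrangian fibration so that $\Omega$ is in standard Darboux form and $\pi^*\eta$ only involves covectors coming from the base, then do the same pigeonhole count to get $\Omega_t^{n+1}=0$ and $\Omega_t^n\wedge\overline{\Omega}_t^{\,n}=\Omega^n\wedge\overline{\Omega}^{\,n}$, and finally read off $\ker\Omega_t$ explicitly to see $d\pi(T^{0,1}_{I_t}X)\subset T^{0,1}S$.

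The only notable difference is one of packaging: you invoke genuine local holomorphic action--angle coordinates $(Z_i,w_i)$ with $\Omega=\sum dw_i\wedge dZ_i$, whereas the paper works pointwise, choosing at each $x\in X^\circ$ a Lagrangian splitting $T^{1,0}_xX=T^{1,0}_xF\oplus W$ and dual bases $e_i,f_i$ with $\Omega(x)=\sum e_i^*\wedge f_i^*$. The paper's version is slightly more economical (pure linear algebra at a point, no appeal to the holomorphic Arnold--Liouville theorem, and no smoothness assumption on $S$ beyond what follows automatically on $S^\circ$), while your coordinate formulation makes the bookkeeping of $dZ_i$, $dw_i$, $d\bar w_i$ symbols a bit more transparent. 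Since every identity you check is pointwise, you could in fact downgrade your ``coordinates'' to a pointwise frame without changing a single step of the argument.
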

\begin{proof}
{\it Part (1).} It is clear that $\Omega_t$ is closed. We will show that $\Omega_t^{n+1} = 0$
and $\Omega_t^n\wedge\overline{\Omega}\mathstrut_t^n = \Omega^n\wedge\overline{\Omega}\mathstrut^n$
for any $t\in \bbC$. It is enough to check both equalities on the dense open subset $X^\circ$.

%In a neighbourhood of any point in $X^\circ$ one can choose special coordinates (the action-angle variables,
%see e.g \cite{MS})
%$x_j, y_j$, $j = 1,\ldots n$ such that $\pi$ is given by
%$(x_1,\ldots,x_n,y_1,\ldots,y_n)\mapsto (x_1,\ldots,x_n)$ and $\Omega = \sum_j dx_j\wdg dy_j$.
The tangent space at any point $x\in X^\circ$ can be decomposed into a direct sum
of two Lagrangian subspaces: $T_x^{1,0}X \simeq T^{1,0}_x F \oplus W$, where $F$ is the
fibre of $\pi$ through $x$. We can choose bases in the two Lagrangian subspaces that are dual
with respect to $\Omega$: $T^{1,0}_x F = \langle e_1,\ldots,e_n\rangle$, $W = \langle f_1,\ldots,f_n\rangle$,
so that $\Omega(x) = \sum_j e_j^*\wdg f_j^*$.
Assume that $\alpha\in \Lambda^{p,q} S$. Then $(\pi^*\alpha)(x) \in \Lambda^pW^*\otimes \Lambda^q\overline{W}\mathstrut^*$
and from the formula for $\Omega$ it is clear that
\begin{equation}\label{eqn_zero}
\Omega^k\wdg \pi^*\alpha = 0 \quad\mbox{if}\quad p+k > n.
\end{equation}

The equality (\ref{eqn_zero}) clearly implies that $\Omega_t^{n+1} =0$.
We write $\eta = \eta_1 + \eta_2$ with $\eta_1\in\Lambda^{2,0}S$ and $\eta_2\in \Lambda^{1,1}S$ and consider the
form $\Omega_t^n\wedge\overline{\Omega}\mathstrut_t^n = (\Omega + t\pi^*\eta)^n\wedge(\overline{\Omega} + \bar{t}\pi^*\overline{\eta})^n$.
Opening the brackets, we obtain a sum, where the summands equal up to a constant
$\Omega^{k_1} \wedge \overline{\Omega}\mathstrut^{k_2} \wedge \pi^*\alpha$
with $\alpha = \eta_1^{l_1} \wedge \eta_2^{m_1} \wedge \bar{\eta}_1^{l_2} \wedge \bar{\eta}_2^{m_2}$
and $k_1+l_1+m_1 = k_2 + l_2 + m_2 = n$. The form $\alpha$
has Hodge type $(2l_1 + m_1 + m_2, 2l_2 + m_1 + m_2)$. Then the formula (\ref{eqn_zero})
and its complex conjugate imply that our summand is zero unless $l_1=m_1=l_2=m_2=0$.
This proves that $\Omega_t^n\wedge\overline{\Omega}\mathstrut_t^n = \Omega^n\wedge\overline{\Omega}\mathstrut^n\neq 0$
pointwise on $X$ and shows that $\Omega_t$ is a C-symplectic form.

{\it Part (2).} To prove that $\pi$ is a holomorphic map from $(X,I_t)$ to $S$ it is
again enough to restrict to the dense open subset $X^\circ$. Using the splitting
of the tangent space at arbitrary $x\in X^\circ$ as above,
we write $$(\pi^*\eta)(x) = \sum_{j,k}( a_{jk} f^*_j\wdg f^*_k + b_{jk} f^*_j\wdg \bar{f}^*_k).$$
Then it is clear that the kernel of $\Omega_t(x) = \sum_j e^*_j\wdg f^*_j + t(\pi^*\eta)(x)$ is spanned by the
vectors
$$
\bar{e}_j, \, j=1\ldots n \quad \mbox{and}\quad \bar{f}_j + t\sum_{l=1}^n b_{lj}e_l, \, j=1\ldots n.
%\frac{\dd}{\dd \bar{y}_j}, \quad \frac{\dd}{\dd \bar{x}_j} - t\sum_{l=1}^n b_{lj}\frac{\dd}{\dd y_l}, \quad j=1\ldots n.
$$
Since $e_l\in \ker{d\pi}$, we see that $d\pi (T^{0,1}_{I_t}X) \subset T^{0,1}S$, hence $d\pi$ is complex-linear and
$\pi$ is holomorphic. To prove that it is a Lagrangian fibration note that for any fibre $F\subset X^\circ$
we clearly have $\Omega_t|_F = 0$.
\end{proof}

In the case when
$X$ is a compact hyperk\"ahler manifold, it has been shown
in \cite{V2} that the complex manifolds $(X,I_t)$
form a smooth complex analytic family over 
the complex line $\bbC$ which we denote by $\XX$.
This family is called {\it degenerate twistorial deformation} of $X$. The terminology
can be explained as follows. Inside the period domain
$\DD$ the usual twistor conic can degenerate into the union
of a pair of projective lines. Removing the point of intersection of those
lines, we get a disjoint union of two affine lines. The periods of the fibres of the
degenerate twistorial deformation form one of such affine lines. 
The two lines are exchanged by complex conjugation,
i.e. one obtains the family parametrized by the second affine line
if one replaces the holomorphic form $\Omega$
with $\overline{\Omega}$ in the construction described above.

\subsection{Moser's isotopy}\label{sec_Moser} Recall the idea of Moser's isotopy theorem:
given a family of real symplectic forms $\omega_t$ parametrized by $t\in[0,1]$ 
such that the cohomology classes $[\omega_t]\in H^2(X,\bbR)$ do not depend on $t$, one constructs
a flow of diffeomorphisms $\varphi_t$ such that $\omega_0 = \varphi_t^*\omega_t$,
see \cite[Theorem 2]{Mo}.
Our aim is to prove a similar statement for C-symplectic forms.
However, no strict analogy is apparent, because certain additional
cohomological obstructions arise, and one needs to introduce some
further conditions to construct the isotopy.

We work in the following setting: $X$ is a manifold of real dimension $4n$,
$\Omega_t \in \Lambda^2_{\bbC}X$ is a family of C-symplectic forms
for $t\in [0,1]$. When talking about families of tensor fields, we always
assume that the dependence on $t$ is continuous and, if necessary, sufficiently differentiable.
We denote by $I_t$ the complex structure corresponding to $\Omega_t$
and by $\XX_t = (X,I_t)$ the corresponding complex manifold. If we use the $(p,q)$-decomposition
with respect to $I_t$, the corresponding bundles have subscript $I_t$, e.g. $\Lambda^{p,q}_{I_t}X$.
We denote by $\LL_{V}$ the Lie derivative along a vector field $V$.

\begin{lem}\label{lem_v}
In the above setting assume that there exists a family of complex 1-forms
$\alpha_t\in \Lambda^1_{\bbC} X$ such that for all $t\in [0,1]$ we have
\begin{enumerate}
\item $\alpha_t\in \Lambda^{1,0}_{I_t}X$,
\item $\frac{d}{dt}{\Omega}_t = d\alpha_t$.
\end{enumerate}
Then there exists a family of real vector fields $V_t\in TX$ such
that $\frac{d}{dt}{\Omega}_t = -\LL_{V_t}\Omega_t$.
\end{lem}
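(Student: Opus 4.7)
The plan is to apply the standard Moser trick, using that $\Omega_t$ is closed to turn the Lie derivative into a contraction.

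First, I would apply Cartan's formula: since $d\Omega_t = 0$, we have
\[
\LL_{V_t}\Omega_t = d(V_t\cnv\Omega_t) + V_t\cnv d\Omega_t = d(V_t\cnv\Omega_t).
\]
So it suffices to find a real vector field $V_t$ with $V_t\cnv\Omega_t = -\alpha_t$, because then $\LL_{V_t}\Omega_t = -d\alpha_t = -\frac{d}{dt}\Omega_t$ by hypothesis (2).

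The next step is to exploit the structure of a C-symplectic form. As recalled after Definition \ref{def_C_sympl}, the kernel of $\Omega_t$ is exactly $T^{0,1}_{I_t}X$, and consequently $\Omega_t \in \Lambda^{2,0}_{I_t}X$. Contraction with $\Omega_t$ therefore defines a bundle morphism $T_\bbC X \to \Lambda^1_\bbC X$ that kills $T^{0,1}_{I_t}X$ and restricts on $T^{1,0}_{I_t}X$ to a $C^\infty$-linear isomorphism onto $\Lambda^{1,0}_{I_t}X$ (by non-degeneracy of $\Omega_t$ together with the fact that it has type $(2,0)$). Since $\alpha_t \in \Lambda^{1,0}_{I_t}X$ by hypothesis (1), there exists a unique smooth section $W_t$ of $T^{1,0}_{I_t}X$ with $W_t\cnv\Omega_t = -\alpha_t$. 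Smoothness in $t$ follows because $I_t$ and $\alpha_t$ vary smoothly and the isomorphism $T^{1,0}_{I_t}X \to \Lambda^{1,0}_{I_t}X$ induced by $\Omega_t$ does too.

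Finally, I would set $V_t := W_t + \overline{W_t}$, which is a real vector field. Because $\overline{W_t}$ is a section of $T^{0,1}_{I_t}X = \ker\Omega_t$, we get
\[
V_t\cnv\Omega_t = W_t\cnv\Omega_t + \overline{W_t}\cnv\Omega_t = -\alpha_t,
\]
which combined with the Cartan-formula computation above yields $\frac{d}{dt}\Omega_t = -\LL_{V_t}\Omega_t$.

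There is no real obstacle here beyond the two structural facts about C-symplectic forms (that they are of type $(2,0)$ with respect to their induced complex structure, and that contraction is an isomorphism from $T^{1,0}_{I_t}X$ to $\Lambda^{1,0}_{I_t}X$); both are packaged in the setup. The only mildly delicate point is checking that the real vector field recovered by adding complex conjugates still solves the contraction equation, and this works precisely because $\Omega_t$ has no $(1,1)$ or $(0,2)$ component, which is what makes the $(1,0)$-hypothesis on $\alpha_t$ necessary and sufficient.
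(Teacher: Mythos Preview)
Your proof is correct and follows essentially the same route as the paper: reduce via Cartan's formula to solving $V_t\cnv\Omega_t=-\alpha_t$, invert the contraction isomorphism $T^{1,0}_{I_t}X\to\Lambda^{1,0}_{I_t}X$ to obtain the $(1,0)$-part, and then pass to the associated real vector field by adding the complex conjugate (the paper phrases this last step as ``taking the real part''). Your additional remark on smooth dependence in $t$ and the closing explanation of why the $(1,0)$-hypothesis on $\alpha_t$ is exactly what is needed are both apt.
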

\begin{proof}
Since the forms $\Omega_t$ are closed, we have $\LL_{V_t}\Omega_t = d(V_t\cnv\Omega_t)$
for any real vector field $V_t$. Decompose $V_t = V_t^{1,0} + V_t^{0,1}$, where $V_t^{1,0} \in T^{1,0}_{I_t}X$
and $V_t^{0,1} = \overline{V_t}^{1,0}$, and note that 
$V_t\cnv\Omega_t = V^{1,0}_t\cnv\Omega_t\in \Lambda^{1,0}_{I_t}X$.
Since the 2-form $\Omega_t$ is symplectic for any $t$,
the map $W\mapsto W\cnv\Omega_t$ defines an isomorphism $T^{1,0}_{I_t}X \stackrel{\sim}{\to}\Lambda^{1,0}_{I_t}X$.
Applying the inverse of this isomorphism to $-\alpha_t$, we get the vector field $V_t^{1,0}$,
and its real part is the desired real vector field.
\end{proof}

In the setting of the above lemma, it remains to integrate the family
of vector fields $V_t$ in order to obtain isotopies that deform $\Omega_0$ into $\Omega_t$.

\begin{thm}[Moser's isotopy, version I]\label{thm_Moser_I}
In the above setting, assume additionally that
$H^1(\XX_t,\OO_{\XX_t}) = 0$ and $[\Omega_t] = [\Omega_0]\in H^2(X,\bbC)$ for all $t\in [0,1]$.
In this case:
\begin{enumerate}
\item There exists a family of vector fields $V_t$ such that $\frac{d}{dt}{\Omega}_t = -\LL_{V_t}\Omega_t$;
\item If $X$ is compact, then there exists a family of diffeomorphisms $\varphi_t$ such that
$\varphi_0 = \mathrm{id}$ and $\varphi_t^*\Omega_t = \Omega_0$ for $t\in [0,1]$.
\end{enumerate}
\end{thm}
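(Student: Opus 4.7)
The plan is to reduce (1) to Lemma \ref{lem_v} and then integrate the resulting vector field to obtain (2). So we must produce a smooth family of $(1,0)_{I_t}$-forms $\alpha_t$ with $d\alpha_t = \dot\Omega_t$.

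First I would analyze the $I_t$-Hodge type of $\dot\Omega_t$. Differentiating the identity $\Omega_t^{n+1}=0$ in $t$ gives $\Omega_t^n \wedge \dot\Omega_t = 0$. Decomposing $\dot\Omega_t = \eta_t^{2,0}+\eta_t^{1,1}+\eta_t^{0,2}$ with respect to $I_t$, the first two summands wedge with $\Omega_t^n\in\Lambda^{2n,0}_{I_t}X$ to zero by type, so $\Omega_t^n\wedge\eta_t^{0,2}=0$. Since $\Omega_t^n$ is a nowhere vanishing trivialization of the canonical bundle, wedging with it is an isomorphism $\Lambda^{0,2}_{I_t}X \xrightarrow{\sim} \Lambda^{2n,2}_{I_t}X$, so $\eta_t^{0,2}=0$; that is, $\dot\Omega_t \in \Lambda^{2,0}_{I_t}X\oplus\Lambda^{1,1}_{I_t}X$.

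Since $[\Omega_t]\in H^2(X,\bbC)$ is constant, $\dot\Omega_t$ is exact, and one can pick a smooth family of primitives $\gamma_t$ with $d\gamma_t=\dot\Omega_t$ (via $\gamma_t = d^*G\dot\Omega_t$ for a family of Hodge Green operators when $X$ is compact, or a suitable local/parametric argument more generally). Write $\gamma_t = \gamma_t^{1,0}+\gamma_t^{0,1}$ with respect to $I_t$. The vanishing of the $(0,2)_{I_t}$-component of $\dot\Omega_t$ reads $\bar\partial\gamma_t^{0,1}=0$, so $\gamma_t^{0,1}$ is a $\bar\partial_{I_t}$-closed $(0,1)$-form on $\XX_t$. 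The hypothesis $H^1(\XX_t,\OO_{\XX_t})=0$ then provides smooth $f_t$ with $\bar\partial f_t = \gamma_t^{0,1}$, chosen smoothly in $t$ by a continuous-family Hodge decomposition argument. Setting $\alpha_t := \gamma_t - df_t$ gives a form whose $(0,1)_{I_t}$-part vanishes, hence $\alpha_t\in\Lambda^{1,0}_{I_t}X$, and $d\alpha_t = d\gamma_t = \dot\Omega_t$. Lemma \ref{lem_v} then yields the real vector field $V_t$ with $\dot\Omega_t = -\LL_{V_t}\Omega_t$, proving (1).

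For (2), compactness of $X$ ensures that the time-dependent vector field $V_t$ integrates to a smooth family of diffeomorphisms $\varphi_t$ with $\varphi_0 = \mathrm{id}$ and $\tfrac{d}{dt}\varphi_t = V_t\circ\varphi_t$. Then
$$\frac{d}{dt}(\varphi_t^*\Omega_t) = \varphi_t^*\bigl(\LL_{V_t}\Omega_t + \dot\Omega_t\bigr) = 0,$$
so $\varphi_t^*\Omega_t \equiv \varphi_0^*\Omega_0 = \Omega_0$. The main obstacle is the smooth dependence on $t$ of the $\bar\partial$-primitive $f_t$ when the complex structure $I_t$ itself varies; this requires invoking Kodaira--Spencer-type continuity of Hodge/Dolbeault decompositions for the smooth family $\{I_t\}$, together with the assumed uniform vanishing $H^1(\XX_t,\OO_{\XX_t})=0$. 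A secondary technical point, producing a smooth primitive $\gamma_t$ of $\dot\Omega_t$ without compactness, is handled similarly once one fixes a family of Hermitian metrics.
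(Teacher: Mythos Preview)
Your proof is correct and follows essentially the same route as the paper: differentiate $\Omega_t^{n+1}=0$ to show $\dot\Omega_t$ has no $(0,2)_{I_t}$-component, choose a primitive $\gamma_t$, use $H^1(\XX_t,\OO_{\XX_t})=0$ to kill its $(0,1)$-part by subtracting $df_t$, then apply Lemma~\ref{lem_v} and integrate. The only difference is cosmetic---you write $\alpha_t=\gamma_t-df_t$ while the paper writes $\alpha_t=\gamma_t^{1,0}-\partial f_t$, which is the same form---and you are more explicit than the paper about the smooth $t$-dependence of $f_t$.
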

\begin{proof}
{\it (1)} We need to find a family of 1-forms $\alpha_t$ that satisfies the
conditions of Lemma \ref{lem_v}. By our assumption $[\Omega_t] = [\Omega_0]$
for all $t\in [0,1]$, so $\Omega_{t+\varepsilon} - \Omega_t$ is exact. It follows that there
exists a family of 1-forms $\beta_t \in \Lambda^1_{\bbC}X$ such that $\frac{d}{dt}\Omega_t = d\beta_t$.
Differentiating the equation $\Omega_t^{n+1} = 0$ we see that $\Omega_t^n\wedge \frac{d}{dt}\Omega_t =0$.
Since $\Omega_t^n$ is a holomorphic volume form, we deduce that $\frac{d}{dt}\Omega_t \in \Lambda^{2,0}_{I_t}\oplus \Lambda^{1,1}_{I_t}$.
Decomposing $\beta_t = \beta_t^{1,0} + \beta_t^{0,1}$, we see that $\bar{\partial} \beta_t^{0,1} = 0$.
By our assumption $H^1(\XX_t,\OO_{\XX_t}) = 0$, therefore $\beta_t^{0,1} = \bar{\partial} f_t$ for
some $f_t$. Define $\alpha_t = \beta_t^{1,0} - \partial f_t$. Then
$d\alpha_t = d\beta_t^{1,0} - \ddb\dd f_t = d\beta_t^{1,0} + \dd \ddb f_t = d\beta_t$
and $\alpha_t$ is of type $(1,0)$ as required. By Lemma \ref{lem_v} we obtain
a family of vector fields $V_t$.

{\it (2)} % the norm of $V_t$ with respect to
%arbitrary fixed Riemannian metric on $X$ is uniformly bounded. This implies
Since $X$ is compact, we can integrate $V_t$ to a flow of diffeomorphisms $\varphi_t$, $t\in[0,1]$
with $\varphi_0 = \mathrm{id}$. We compute:
$$
\frac{d}{dt}(\varphi_t^*\Omega_t) = \varphi_t^*\left(\LL_{V_t}\Omega_t + \frac{d}{dt}\Omega_t\right) = 0
$$
by the definition of $V_t$ in part (1). Since $\varphi_0^*\Omega_0 = \Omega_0$,
we conclude that $\varphi_t^*\Omega_t = \Omega_0$ for all $t\in [0,1]$.
%Denoting $\widetilde{\Omega}_t = \varphi_t^*\Omega_0$
%we see that $\frac{d}{dt}\widetilde{\Omega}_t = \LL_{V_t}\widetilde{\Omega}_t$,
%by definition of the flow $\varphi_t$. This differential equation uniquely
%determines $\widetilde{\Omega}_t$ from the initial condition $\widetilde{\Omega}_0 = \Omega_0$.
%By Lemma \ref{lem_v}, $\Omega_t$
%satisfies the same differential equation $\frac{d}{dt}{\Omega}_t = \LL_{V_t}\Omega_t$
%with the same initial condition, hence $\widetilde{\Omega}_t = \Omega_t$ for $t\in[0,1]$.
\end{proof}

This result can be applied to compact hyperk\"ahler manifolds,
which satisfy $H^1(X, \calo_X)=0$,
but in that case it easily follows from Bogomolov's version
of Kuranishi--Kodaira--Spencer theory, \cite{_Bogomolov:1978_}.
On non-compact manifolds, the assumption $H^1(X, \calo_X)=0$
is often quite restrictive. 
Moreover, on non-compact manifolds 
one needs some additional information to guarantee
that the flow $\varphi_t$ exists globally on $X$.
Without such information, integrating the vector field
$V_t$ in a neighbourhood of a compact subset,
one can deduce from Theorem \ref{thm_Moser_I} the following local statement.

\begin{cor}\label{cor_Moser_compact}
Let $\pi\colon \XX \to \Delta$ be
a smooth family of holomorphic
symplectic manifolds (not necessarily compact)
over the unit disc, trivial as a family of $C^\infty$ manifolds.
Denote by $\XX_t= \pi^{-1}(t)$ a fibre, and
by $\Omega_t\in H^0(\XX_t, \Omega^2_{\XX_t})$ its holomorphic
symplectic form. Using the $C^\infty$ trivialization to identify
cohomology groups of the fibres, assume that the cohomology
class of $\Omega_t$ does not depend on $t\in \Delta$,
and $H^1(\XX_t, \calo_{\XX_t})=0$. 
Let $K \subset \XX_{t_0}$ be a compact subset. Then there exists an open neighbourhood
$U\subset \Delta$  of $t_0\in \Delta$, and an open subset 
${\tilde U} \subset \pi^{-1}(U)$, with $K\subset {\tilde U}$,
with the following property. The set
${\tilde U}$ is locally trivially
fibred over $U$, with all fibres ${\tilde U} \cap \pi^{-1}(t)$,
$t\in U$ isomorphic as holomorphic symplectic
manifolds.
\end{cor}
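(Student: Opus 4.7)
The plan is to reuse the vector-field construction of part (1) of Theorem \ref{thm_Moser_I}, whose proof uses neither compactness of the manifold nor one-dimensionality of the parameter, and then to integrate the resulting time-dependent vector fields locally on a neighbourhood of $K$ via standard ODE arguments.

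Use the given $C^\infty$-trivialization to identify $\XX$ with $X\times\Delta$, so that the family becomes a smooth family of C-symplectic forms $\Omega_t$ on a fixed smooth manifold $X$, with $[\Omega_t]=[\Omega_{t_0}]$ in $H^2(X,\bbC)$ and $H^1(\XX_t,\OO_{\XX_t})=0$ for every $t\in\Delta$. For each $t\in\Delta$ introduce the straight-line path $\gamma^t_s:=t_0+s(t-t_0)$, $s\in[0,1]$, and apply the argument of Theorem \ref{thm_Moser_I}(1) to the one-parameter family $\tilde\Omega^t_s:=\Omega_{\gamma^t_s}$: the constancy of the de Rham class gives a smooth primitive $\beta^t_s$ of $\tfrac{d}{ds}\tilde\Omega^t_s$; the vanishing $H^1(\XX_{\gamma^t_s},\OO)=0$ lets us correct $\beta^t_s$ into a $(1,0)$-form $\alpha^t_s=(\beta^t_s)^{1,0}-\dd f^t_s$ with the same differential; and Lemma \ref{lem_v} then produces real vector fields $V^t_s$ on $X$ with $\tfrac{d}{ds}\tilde\Omega^t_s=-\LL_{V^t_s}\tilde\Omega^t_s$. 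Parametric versions of the Poincar\'e and Dolbeault lemmas let us arrange these choices smoothly in $(s,t)$, and since $\tfrac{d}{ds}\tilde\Omega^{t_0}_s\equiv 0$ along the constant path, a $\bbC$-linear selection of primitives forces $\beta^{t_0}_s=f^{t_0}_s=\alpha^{t_0}_s\equiv 0$ and hence $V^{t_0}_s\equiv 0$.

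Consequently the time-$1$ flow $\varphi^{t_0}$ is the identity on $X$. Fix a precompact open neighbourhood $W\subset X$ of $K$. By continuous dependence of ODE solutions on parameters, there exists an open neighbourhood $U\subset\Delta$ of $t_0$ such that the time-$1$ flow $\varphi^t$ of $V^t_\sdot$ is defined on $W$ for every $t\in U$. The computation
\[
\tfrac{d}{ds}\bigl((\varphi^t_s)^*\tilde\Omega^t_s\bigr)
=(\varphi^t_s)^*\bigl(\LL_{V^t_s}\tilde\Omega^t_s+\tfrac{d}{ds}\tilde\Omega^t_s\bigr)=0
\]
from the proof of Theorem \ref{thm_Moser_I}(2) gives $(\varphi^t)^*\Omega_t=\Omega_{t_0}$ on $W$, so each $\varphi^t$ is a holomorphic symplectomorphism of $(W,\Omega_{t_0})$ onto its image in $\XX_t$. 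Assemble them into a smooth map $\Phi\colon W\times U\to\XX$, $(x,t)\mapsto\varphi^t(x)\in\XX_t$; since $\Phi|_{W\times\{t_0\}}$ is the identity, shrinking $U$ if necessary makes $\Phi$ a diffeomorphism onto an open set $\tilde U\subset\pi^{-1}(U)$ containing $K$. By construction $\tilde U\cap\pi^{-1}(t)=\varphi^t(W)$, and these fibres are all biholomorphic and symplectomorphic to $(W,\Omega_{t_0})$, giving the required local triviality over $U$.

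The principal technical point is the parametric choice of the 1-forms $\beta^t_s,\alpha^t_s$ on the non-compact manifolds $\XX_{\gamma^t_s}$: smooth selection of the $d$-primitive $\beta^t_s$ follows from the parametric Poincar\'e lemma, while smooth selection of the $\ddb$-primitive $f^t_s$ of $(\beta^t_s)^{0,1}$ can be obtained either by Hodge-theoretic inversion on a slightly larger relatively compact open subset containing $W$ (which is enough, since the flow is only integrated on $W$), or by a linear global choice combined with elliptic regularity for the data.
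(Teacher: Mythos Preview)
The proposal is correct and follows exactly the approach the paper indicates: apply part (1) of Theorem \ref{thm_Moser_I} to obtain the time-dependent vector field and then integrate it locally on a neighbourhood of the compact set $K$. The paper itself offers no proof beyond the sentence preceding the corollary, so your elaboration of the details (the straight-line reparametrization, the vanishing $V^{t_0}_s\equiv 0$, continuous dependence of flows, and the parametric smoothness of the Dolbeault primitive) goes well beyond what the paper supplies.
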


It turns out that one can prove
a useful version of Moser's theorem in the situation when the family $\Omega_t$
comes from a Lagrangian fibration.

\begin{thm}[Moser's isotopy, version II]\label{thm_Moser_II}
Let $\pi\colon X\to S$ be a holomorphic Lagrangian fibration in the sense
of section \ref{sec_Lag}. Denote by $\Omega$ the holomorphic symplectic
form on $X$ and assume that $\alpha \in \Lambda^{1,0}S$. Let $\Omega_t = \Omega + t\pi^*(d\alpha)$
for $t\in[0,1]$ be the family of C-symplectic forms on $X$.
Then there exists a family of diffeomorphisms $\varphi_t$ of $X$ such that
$\varphi_0 = \mathrm{id}$ and $\varphi_t^*\Omega_t = \Omega_0$ for $t\in [0,1]$.
\footnote{A version of this theorem was independently obtained
by Abasheva and Rogov, to appear in \cite{_Abasheva:commu_}.}
\end{thm}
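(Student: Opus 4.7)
The plan is to reduce matters to Lemma \ref{lem_v} by writing down an explicit primitive $\alpha_t$ for $\tfrac{d}{dt}\Omega_t$, and then to integrate the resulting vector field using the properness of $\pi$. The naive candidate works: I would take
\[
\alpha_t \;:=\; \pi^*\alpha.
\]
As established in Section \ref{sec_Lag}, the map $\pi\colon(X,I_t)\to S$ is holomorphic for every $t\in[0,1]$, so $\pi^*\alpha\in\Lambda^{1,0}_{I_t}X$ for each $t$, which verifies the first hypothesis of Lemma \ref{lem_v}. The second is trivial, since $d\alpha_t=\pi^*(d\alpha)=\tfrac{d}{dt}\Omega_t$; note that here the primitive does not even depend on $t$. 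Lemma \ref{lem_v} thus yields a family of real vector fields $V_t$ on $X$ with $\tfrac{d}{dt}\Omega_t = -\LL_{V_t}\Omega_t$.

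The key additional observation is that $V_t$ is automatically tangent to the fibres of $\pi$. Indeed, its $(1,0)$-part is characterised by $V_t^{1,0}\cnv\Omega_t = -\pi^*\alpha$, and the right-hand side annihilates the subbundle $\ker d\pi$. Over the open dense locus $X^\circ$ where $\pi$ is a submersion, $T^{1,0}_{I_t}F\subset\ker d\pi$ is a Lagrangian subspace, hence equal to its own $\Omega_t$-orthogonal complement; so $V_t^{1,0}(x)\in T^{1,0}_{I_t,x}F$, i.e.\ $d\pi(V_t^{1,0})=0$ on $X^\circ$. Taking the real part gives $d\pi(V_t)=0$ on $X^\circ$. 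Since $V_t$ and $d\pi$ are smooth on all of $X$ and $X^\circ$ is dense, we conclude that $d\pi(V_t)\equiv 0$ everywhere on $X$.

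Properness of $\pi$ now guarantees that every fibre is compact, and a time-dependent smooth vector field tangent to a compact submanifold integrates globally in time. Hence $V_t$ generates a unique smooth family of diffeomorphisms $\varphi_t\colon X\to X$ with $\varphi_0=\mathrm{id}$ and $\pi\circ\varphi_t=\pi$. The usual Moser computation, already used in the proof of Theorem \ref{thm_Moser_I}(2),
\[
\tfrac{d}{dt}(\varphi_t^*\Omega_t) \;=\; \varphi_t^*\!\left(\LL_{V_t}\Omega_t + \tfrac{d}{dt}\Omega_t\right) \;=\; 0,
\]
combined with $\varphi_0^*\Omega_0=\Omega_0$, then yields $\varphi_t^*\Omega_t=\Omega_0$ for all $t\in[0,1]$.

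The main obstacle I would anticipate is not in the algebraic manipulations but in ensuring that the Moser flow is globally defined on the (possibly non-compact) manifold $X$; this is precisely what the fibrewise tangency of $V_t$ resolves, and it is also what allows one to dispense with the cohomological hypothesis $H^1(\XX_t,\OO_{\XX_t})=0$ that was needed in Theorem \ref{thm_Moser_I}.
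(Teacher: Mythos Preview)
Your proof is correct and follows essentially the same approach as the paper's: choose $\alpha_t=\pi^*\alpha$, apply Lemma~\ref{lem_v}, show $d\pi(V_t)=0$ via the Lagrangian condition on smooth fibres and extend by density, then use compactness of the fibres to integrate the flow globally. The only imprecision is the phrase ``tangent to a compact submanifold'' --- singular fibres need not be submanifolds --- but what you actually use, namely that integral curves are trapped in the compact level sets $\pi^{-1}(s)$, is exactly right and matches the paper's argument.
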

\begin{proof}
As we recalled in section \ref{sec_Lag}, the 2-forms $\Omega_t$
are C-symplectic and $\pi$ is a holomorphic Lagrangian
fibration on $(X,I_t)$. It follows that $\pi^*\alpha$
is in $\Lambda^{1,0}_{I_t}X$
for any $t\in[0,1]$. Hence we can apply Lemma \ref{lem_v}
with $\alpha_t = \pi^*\alpha$ and obtain a family of vector fields $V_t$.

We need to integrate $V_t$ to a flow of diffeomorphisms. Recall that by the construction
in the proof of Lemma \ref{lem_v} we have $V_t\cnv \Omega_t = -\alpha_t$.
Let $F$ be a smooth fibre of $\pi$ and $W \in T^{1,0}F$ some locally defined vector field.
Then $\Omega_t (V_t, W) = -\alpha_t(W) = -(\pi^*\alpha)(W) = 0$. Since $F$ is
Lagrangian and $W$ arbitrary, the last equality implies that $V_t$ is tangent to $F$.
So $d\pi(V_t)=0$ at all points of smooth fibres, hence everywhere on $X$, because smooth
fibres are dense.

We conclude that integral curves of $V_t$ are contained in the fibres of $\pi$.
Let now $F$ be an arbitrary fibre. By our assumptions about Lagrangian fibrations,
$F$ is compact. %Fixing an arbitrary Riemannian metric on $X$ we see that the norms
%of the vectors $V_t$ at all points of $F$ are bounded from above, uniformly in $t$.
This implies that any integral curve of $V_t$ that starts at a point of $F$
can be extended for all $t\in[0,1]$. Since this is true for all fibres, we get
a well-defined flow of diffeomorphisms $\varphi_t$, $t\in[0,1]$ and
the rest of the proof goes as in Theorem \ref{thm_Moser_I}.
\end{proof}

\section{Applications}\label{sec_appl}

We apply Theorem \ref{thm_Moser_II} to Lagrangian fibrations
on compact hyperk\"ahler manifolds of maximal holonomy,
that is, compact simply connected K\"ahler manifolds $X$
with $H^0(X,\Omega^2_X)$ spanned by a symplectic form $\Omega$. 
Further on, we shall call these manifolds just 
``hyperk\"ahler manifolds''. We let $2n = \dim_{\bbC} X$.
Assume that $\pi\colon X\to S$ is a holomorphic Lagrangian
fibration in the sense explained in section \ref{sec_Lag}.
For the discussion of basic properties
of such fibrations we refer to \cite{Ma1}, \cite{Ma2} and \cite{Hu1}.
It is explained in \cite[footnote on page 53]{AC} that in this setting
the base $S$ is always projective.

\subsection{A degenerate twistorial deformation is Zariski locally trivial over the base}

We fix a projective
embedding $\iota\colon S\hrarr \bbP^m$, let $\omega_{FS} \in \Lambda^{1,1}\bbP^m$ be the Fubini--Study form
and $\eta = \iota^*\omega_{FS}$.
Setting $\Omega_t = \Omega + t\pi^*\eta$, $t\in \bbC$, we get a family of C-symplectic
structures that form a degenerate twistorial deformation of $X$. We use the notation
$\XX_t = (X,I_t)$ as above.

Let $H\subset \bbP^m$ be an arbitrary hyperplane and $U = S\setminus H$
its open complement in $S$. We denote by $\XX_{U,t}$ the preimage of $U$ in $\XX_t$
and $X_U = \XX_{U,0}$.

\begin{thm}\label{thm_main}
For any $t_1, t_2\in \bbC$ the complex manifolds $\XX_{U,t_1}$ and $\XX_{U,t_2}$ are isomorphic.
\end{thm}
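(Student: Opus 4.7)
The plan is to reduce directly to Theorem \ref{thm_Moser_II} after exhibiting a $(1,0)$-primitive for $\eta$ on the open subset $U \subset S$. The crucial input is the standard fact that the Fubini--Study form admits a global $(1,0)$-primitive on the affine chart $\bbP^m \setminus H \cong \bbC^m$: using $\omega_{FS} = \frac{i}{2\pi}\partial\bar\partial\log(1+|z|^2)$ together with $d = \partial + \bar\partial$ and $\partial^2 = 0$, one computes $\omega_{FS}|_{\bbC^m} = d\alpha_0$ with $\alpha_0 := -\frac{i}{2\pi}\partial\log(1+|z|^2) \in \Lambda^{1,0}\bbC^m$. Setting $\alpha := \iota^*\alpha_0 \in \Lambda^{1,0}U$ (interpreted in the Vaisman sense of smooth forms on the possibly singular base), we obtain $\eta|_U = d\alpha$.

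Restricting $\pi$ to $X_U \to U$ yields a holomorphic Lagrangian fibration whose fibres are still the compact tori that were the fibres of the original $\pi$, and the C-symplectic family restricted to $X_U$ becomes $\Omega_t|_{X_U} = \Omega|_{X_U} + t\,\pi^*(d\alpha)$. For any fixed $t_0 \in \bbC$, the scaled form $t_0\alpha$ is still a $(1,0)$-form on $U$; applying Theorem \ref{thm_Moser_II} to the restricted fibration with $t_0\alpha$ in place of $\alpha$ produces a diffeomorphism $\varphi\colon X_U \to X_U$ such that
\[
\varphi^*\bigl(\Omega + t_0\pi^*\eta\bigr)|_{X_U} = \Omega|_{X_U}.
\]
Because the complex structure of a C-symplectic manifold is canonically recovered from the form itself, $\varphi$ is automatically a biholomorphism from $\XX_{U,0}$ to $\XX_{U,t_0}$. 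Composing two such biholomorphisms yields $\XX_{U,t_1} \simeq \XX_{U,t_2}$ for arbitrary $t_1, t_2 \in \bbC$.

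The main obstacle this argument has to overcome is that, viewed on the projective base $S$, the form $\eta$ represents the nonzero ample class pulled back from $\bbP^m$ and therefore admits no global $(1,0)$-primitive; Theorem \ref{thm_Moser_II} cannot be applied over $S$ itself. Removing a hyperplane section is precisely the minimal device that kills this cohomology class, at the cost of passing to the open subset $X_U$. The other essential ingredient is that the Moser vector field $V_t$ furnished by Lemma \ref{lem_v} is tangent to the compact Lagrangian fibres (as shown in the proof of Theorem \ref{thm_Moser_II}), so it integrates for all $s \in [0,1]$ despite the non-compactness of $X_U$; without this fibrewise tangency, the flow could escape to infinity and the construction of $\varphi$ would fail.
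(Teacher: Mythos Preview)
Your argument is correct and is essentially the same as the paper's: both exhibit a $(1,0)$-primitive for $\eta|_U$ via the K\"ahler potential $\log(1+|z|^2)$ of the Fubini--Study form on the affine chart $\bbP^m\setminus H$, then invoke Theorem~\ref{thm_Moser_II} on the restricted fibration $X_U\to U$ (with the scalar $t_1$ absorbed into the primitive) to produce the required symplectomorphism, hence biholomorphism, between $\XX_{U,0}$ and $\XX_{U,t_1}$. Your additional commentary on why the hyperplane must be removed and why the flow integrates on the non-compact $X_U$ is accurate but already implicit in the hypotheses and proof of Theorem~\ref{thm_Moser_II}; one minor imprecision is that not all fibres of $\pi$ are tori (only the smooth ones are), but what matters for the argument is merely that they are compact.
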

\begin{proof}
It is enough to prove that $\XX_{U,t_1}$ is isomorphic to $\XX_{U,0}$. For this we shall
use Moser's isotopy theorem \ref{thm_Moser_II}.

We may choose local coordinates $(x_0:\ldots :x_m)$ on $\bbP^m$ such that $H$ is given
by $x_0 = 0$. In the open affine chart $x_0 \neq 0$ the Fubini--Study form is given by
$$
\omega_{FS}|_{\bbP^m\setminus H} = \ii\dd\ddb\log\left(1+\sum_{j=1}^m |x_j|^2\right)
$$
Note that $U$ is the intersection of $\bbP^m\setminus H$ with $S$.
We let $\alpha$ be the restriction of the 1-form $- \ii t_1 \dd \log(1+\sum_{j=1}^m |x_j|^2)$
to $U$. Then $\alpha\in \Lambda^{1,0}U$ and
$d\alpha = t_1\omega_{FS}|_U$. We can apply Theorem \ref{thm_Moser_II}, obtaining
a flow of diffeomorphisms $\varphi_t$, $t\in[0,1]$, such that $\varphi_1^*\Omega_{t_1}|_{\pi^{-1}(U)} = \Omega|_{\pi^{-1}(U)}$.
Then $\varphi_1$ defines an isomorphism of complex manifolds $\XX_{U,0}\stackrel{\sim}{\to} \XX_{U,t_1}$.
\end{proof}

\begin{rem}
The restriction of the Fubini-Study form to $S$ represents the cohomology
class of a hyperplane section, so this restriction is not an exact form.
Therefore to apply Moser's lemma it is necessary
to have $H\neq \emptyset$ in the above theorem. This condition is not
redundant, because the fibres $\XX_t$ are typically not pairwise isomorphic
to each other.
\end{rem}

Denote by $\mathcal{A}ut^s(X/S)$ the following sheaf of abelian groups in the Zariski topology
on $S$. For an open subset $U\subset S$ the sections of $\mathcal{A}ut^s(X/S)$ over $U$ are
complex analytic automorphisms of the complex manifold $X_U$ that commute with $\pi$
and preserve the symplectic form $\Omega|_{X_U}$.
Since a general fibre of $\pi$ is a complex torus, such automorphisms form an abelian group.

%The corollary below follows from Theorem \ref{thm_main} immediately.
\begin{cor}\label{cor_twist}
For any $t\in \bbC$ the complex manifold $\XX_{t}$ can be obtained
as the twist of $X$ by a 1-cocycle with values in $\mathcal{A}ut^s(X/S)$.
\end{cor}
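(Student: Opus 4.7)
The plan is to use Theorem~\ref{thm_main} to construct local trivializations of $\XX_t$ over a Zariski open cover of $S$ and then read off the desired cocycle from the transition maps. First I cover $S$ by the Zariski open sets $U_i = S \setminus H_i$ ($i=0,\ldots,m$), where $H_i$ is the hyperplane section cut out on $S$ by the coordinate hyperplane $\{x_i = 0\}$ of $\bbP^m$. Since $\bigcap_i \{x_i=0\} = \emptyset$ in $\bbP^m$, the $U_i$ cover $S$. For each $i$, the proof of Theorem~\ref{thm_main}, applied to the hyperplane $H_i$ together with the formula for the Fubini--Study form in the affine chart $x_i \neq 0$, produces a diffeomorphism $\varphi_i \colon X_{U_i} \stackrel{\sim}{\longrightarrow} \XX_{U_i,t}$ satisfying $\varphi_i^*\Omega_t = \Omega$ on $\pi^{-1}(U_i)$.

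Next I verify that each $\varphi_i$ is biholomorphic, commutes with $\pi$, and preserves the holomorphic symplectic form. Biholomorphicity is automatic: the complex structure is recovered intrinsically from the C-symplectic form (Definition~\ref{def_C_sympl}), so the identity $\varphi_i^*\Omega_t = \Omega$ forces $\varphi_i$ to intertwine $I_0$ and $I_t$. Compatibility with $\pi$ follows from the proof of Theorem~\ref{thm_Moser_II}, where the integrating vector field $V_t$ is shown to be tangent to the fibres of $\pi$; preservation of the holomorphic symplectic form is already built into the construction.

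With these properties in hand, the transition maps $g_{ij} := \varphi_j^{-1} \circ \varphi_i \in \Aut(X_{U_i\cap U_j})$ satisfy $\pi \circ g_{ij} = \pi$ and $g_{ij}^*\Omega = \Omega$ (the latter because $(\varphi_j^{-1})^*\Omega = \Omega_t$ combines with $\varphi_i^*\Omega_t = \Omega$). Hence $\{g_{ij}\}$ is a 1-cocycle with values in $\mathcal{A}ut^s(X/S)$ on the Zariski cover $\{U_i\}$, and the $\varphi_i$ realise $\XX_t$ as the twist of $X$ by this cocycle, proving the corollary. The only point to watch is that the cover must be genuinely Zariski open rather than merely open in the analytic topology, which is what makes it essential to work with complements of hyperplane sections inherited from the projective embedding $\iota\colon S \hrarr \bbP^m$; beyond this, the argument is a routine gluing verification.
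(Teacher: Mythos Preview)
Your proof is correct and follows essentially the same approach as the paper: cover $S$ by the $m+1$ affine charts $U_i$ complementary to the coordinate hyperplanes, apply Theorem~\ref{thm_main} on each to get $\varphi_i\colon X_{U_i}\stackrel{\sim}{\to}\XX_{U_i,t}$, and take $\psi_{ij}=\varphi_j^{-1}\circ\varphi_i$ as the cocycle. You spell out a few points the paper leaves implicit---biholomorphicity from the C-symplectic formalism, compatibility with $\pi$ via the fibrewise tangency of the Moser vector field, and the computation $g_{ij}^*\Omega=\Omega$---but the argument is otherwise identical.
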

\begin{proof}
Keeping the notation as above, we cover $S$ by $m+1$ affine charts $U_i$
and denote by $\varphi_i\colon X_{U_i}\stackrel{\sim}{\to} \XX_{U_i,t}$ the
isomorphisms constructed in Theorem \ref{thm_main}. Then
$\psi_{ij} = \varphi_j^{-1}\circ\varphi_i \in \mathcal{A}ut^s(X/S)(U_i\cap U_j)$
define the 1-cocycle in question.
\end{proof}

Below we consider several applications of the above theorem.

\subsection{A complex manifold with several algebraic structures}
\label{_several_algebraic_Subsection_}
Recall that small deformations of compact K\"ahler manifolds remain K\"ahler
(\cite{_Kod-Spen-AnnMath-1960_}).
This implies that in the above setting for $t$ sufficiently close to zero the manifolds
$\XX_t$ are hyperk\"ahler. Using the projectivity criterion of Huybrechts
(\cite[Proposition 26.13]{Hu1}),
we can determine for which $t$ the manifolds $\XX_t$ are projective.
%More precisely, this means that $\XX_t$ is the analytification of
%some complex projective variety.
Assume that two projective fibres $\XX_{t_1}$ and $\XX_{t_2}$
are not birational.
Note that $\XX_{U,t_1}$ and $\XX_{U,t_2}$ are both quasi-projective.
Theorem \ref{thm_main} shows that $\XX_{U,t_1}$ and $\XX_{U,t_2}$ are isomorphic
as complex manifolds. But they can not be isomorphic (or even birational) as algebraic varieties,
otherwise $\XX_{t_1}$ and $\XX_{t_2}$ would be birational, contrary to our choice.

Starting from a K3 surface $X$ one can
produce a sufficiently general degenerate twistorial line in the moduli space of $X$,
and make sure that infinitely many fibres $\XX_{t_i}$ are projective and
pairwise non-birational. This gives us the following statement.

\begin{cor}\label{cor_1}
There exists a complex K3 surface $X$, a Zariski-open subset $\XX\subset X$ and infinitely many complex quasi-projective
varieties $\YY_j$ that are pairwise non-birational and such that $\YY_j^{\mathrm{an}}$
are isomorphic to $\XX$ as complex manifolds.
\end{cor}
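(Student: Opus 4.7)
The plan is to apply Theorem \ref{thm_main} to an elliptic K3 surface $X$ and extract infinitely many non-isomorphic projective fibres from the resulting degenerate twistorial deformation. Take $X$ to be a K3 surface carrying an elliptic fibration $\pi\colon X \to \bbP^1$, regarded as a Lagrangian fibration in the sense of section \ref{sec_Lag}. Let $\iota\colon \bbP^1 \hookrightarrow \bbP^1$ be the identity embedding, fix a point $H \in \bbP^1$, set $U = \bbP^1 \setminus H \cong \bbA^1$, and define $\XX := X_U$. Forming the family $\Omega_t = \Omega + t\pi^*\eta$ with $\eta = \iota^*\omega_{FS}$, Theorem \ref{thm_main} immediately shows that every $\XX_{U,t}$ is biholomorphic to $\XX$.

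The core of the argument is to exhibit infinitely many values $t_j \in \bbC$ for which $\XX_{t_j}$ is projective and such that these K3 surfaces are pairwise non-isomorphic. The periods sweep out the affine line $[\Omega_t] = [\Omega] + t[\pi^*\eta]$ in $H^2(X,\bbC)$, which lies on the K3 period quadric since $[\pi^*\eta]$, being the class of a fibre of $\pi$, is isotropic and orthogonal to $[\Omega]$. By Huybrechts' projectivity criterion, $\XX_t$ is projective precisely when some integral class $\lambda \in H^2(X,\bbZ)$ of positive square satisfies $(\lambda, \Omega_t) = 0$, and for each admissible $\lambda$ this pins down the unique value $t_\lambda = -(\lambda, \Omega)/(\lambda, \pi^*\eta)$. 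For a sufficiently generic choice of the initial period $[\Omega]$, infinitely many integral classes $\lambda$ produce pairwise distinct values $t_\lambda$ at which the transcendental lattices of $\XX_{t_\lambda}$ are pairwise non-isometric; by the Global Torelli theorem the corresponding K3 surfaces are then pairwise non-isomorphic, yielding the desired sequence $\{t_j\}$.

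Setting $\YY_j := \XX_{U,t_j}$, this is a Zariski-open subset of the projective K3 surface $\XX_{t_j}$ and hence quasi-projective, with $\YY_j^{\mathrm{an}} \cong \XX$ by the first step. A birational equivalence between $\YY_{j_1}$ and $\YY_{j_2}$ would extend to a birational equivalence between the projective compactifications $\XX_{t_{j_1}}$ and $\XX_{t_{j_2}}$, and since birational K3 surfaces are isomorphic, this contradicts the choice of the $t_j$. The main obstacle is the genericity step in the middle paragraph: one must arrange that the degenerate twistorial line in the period domain meets infinitely many projectivity hyperplanes at points lying in distinct $\rmO(\Lambda_{K3})$-orbits. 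This is handled by a density/countability argument, using that the isomorphism classes of K3 surfaces correspond to $\rmO(\Lambda_{K3})$-orbits on the period domain and that countably many integral classes produce projective periods on the line.
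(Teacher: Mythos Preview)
Your strategy matches the paper's: use Theorem~\ref{thm_main} on an elliptic K3, locate infinitely many projective fibres on the degenerate twistorial line, and argue that they are pairwise non-birational so that the open subsets $\YY_j=\XX_{U,t_j}$ give the desired quasi-projective structures. The framing via Huybrechts' criterion and the observation that a birational map of the $\YY_j$ extends to one of the projective K3 compactifications are exactly what the paper does.

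The gap is precisely where you flag it. Saying the genericity step ``is handled by a density/countability argument'' is not a proof; you have not explained which countable family of loci must be avoided, nor why the remaining projective points on the line fall into infinitely many distinct isomorphism classes. The fact that countably many integral $\lambda$ produce projective periods on the line does not by itself prevent all of those periods from lying in finitely many $\rmO(\Lambda_{K3})$-orbits. You need an actual invariant that separates them.

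The paper makes both points explicit. First, it removes from $\DD_\ell$ the (countably many, positive-codimension) loci $\DD_W^+$ swept out by twistorial lines meeting $\DD\cap\bbP(W^\perp\otimes\bbC)$ for every rational $3$-plane $W\ni\ell$; a period in the complement forces $\mathrm{rk}\,\mathrm{NS}(\XX_t)\le 2$ along the entire line. Second, it constructs for each $d$ a primitive sublattice $\Lambda_d\simeq\HH(d)\subset V_\bbZ$ containing $\ell$ and picks $t_d$ with $[\Omega_{t_d}]\in\Lambda_d^\perp$; the rank bound then gives $\mathrm{NS}(\XX_{t_d})=\Lambda_d$, and the discriminants $\mathrm{disc}(\Lambda_d)$ distinguish the surfaces. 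Your transcendental-lattice/Torelli formulation could be made to work along similar lines, but you would still need the rank control from the first step and an explicit invariant (e.g.\ the discriminant of the rank-$2$ N\'eron--Severi lattice) from the second. As written, the middle paragraph asserts the conclusion of these two steps without supplying either.
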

\begin{proof}
For a complex K3 surface $X$, let $V_R = H^2(X,R)$
for $R = \bbZ, \bbQ, \bbR$ and $\bbC$. Let $q$ be the intersection form
on $V_\bbZ$. Let $\DD = \{x\in \bbP V_{\bbC}\st q(x) = 0, q(x,\bar{x})>0\}$
be the period domain. We assume that $\pi\colon X\to \bbP^1$ is a Lagrangian fibration
and denote by $\ell \in V_\bbZ$ the primitive cohomology class $[\pi^*\omega_{FS}]$.

{\it Step 1.} We first choose sufficiently general degenerate twistorial line.
Let $\DD_\ell = \DD \cap \bbP(\ell^\perp)$. The period domain has dimension $20$,
and $\DD_\ell$ is a hypersurface in it. The variety $\DD_\ell$ is an $\bbA^1$-bundle
over $\DD' = \{x\in \bbP(\ell^\perp/\langle\ell\rangle)\st q(x) = 0, q(x,\bar{x})>0\}$, the
fibres being the degenerate twistorial lines.

Let $W\subset V_\bbQ$ be a 3-dimensional subspace that contains $\ell$, and denote
by $\mathcal{S}$ the set of all such subspaces. This set is countable. Let $\DD_W = \DD \cap \bbP(W^\perp\otimes\bbC)$
and $\DD_W^+$ be the union of all degenerate twistorial lines in $\DD_\ell$ that pass through the
points of $\DD_W$. Since $q|_{W^\perp}$ is non-zero, the dimension of $\DD_W$ is $17$,
and the dimension of $\DD_W^+$ is not bigger than $18$. Hence $\DD_W^+$ is of positive codimension in $\DD_\ell$,
and $\DD_\ell^\circ = \DD_\ell \setminus \cup_{W\in \mathcal{S}} \DD_W^+$ is non-empty.
Deforming $X$ we may assume that its period $p\in \DD_\ell^\circ$.

{\it Step 2.} We consider the degenerate twistorial line through $[\Omega]$ and $\ell$.
Cohomology classes of C-symplectic forms on this line are of the form
$[\Omega_t] = [\Omega] + t\ell$. Recall that $V_\bbZ$ is the even unimodular 
lattice of signature $(3,19)$. Denote by $\HH(d)$ the rank two hyperbolic
plane with intersection form multiplied by $d$. 
By \cite[Appendix to \S 6, Theorem 1]{PS}, there exists a primitive
embedding of lattices $\HH(d)\hrarr V_\bbZ$, unique up to an automorphism of $V_\bbZ$. Conjugating such an embedding
by an automorphism of $V_\bbZ$, we find for arbitrary $d$ a primitive sublattice $\Lambda_d\subset V_\bbZ$
such that $\Lambda_d\simeq \HH(d)$ and $\ell\in \Lambda_d$.
Let $t_d$ be such that $[\Omega_{t_d}] \in \Lambda_d^{\perp}$.

% For any $\varepsilon > 0$, $d > 0$ consider the following subset of $V_\bbR$:
%$$\CC_{\varepsilon,d} = \{v\in V_\bbR\st |q([\Omega],v)| < \varepsilon |q(\ell,v)|, |q(\ell,v)| > d\}.$$
%This open set has the property that if $v\in \CC_{\varepsilon,d}$, then $\lambda v\in \CC_{\varepsilon,d}$
%for any $\lambda > 0$. It follows that $\CC_{\varepsilon,d}$ contains some primitive vector $v\in V_\bbZ$.
%Hence we can choose sequences $\varepsilon_j \to 0$, $d_j \to +\infty$ and $v_j\in V_\bbZ$
%such that $v_j$ are primitive, $v_j\in \CC_{\varepsilon_j,d_j}$ and $|q(\ell,v_j)|$ is monotone
%increasing. Define $t_j = -q([\Omega],v_j)/q(\ell,v_j)$ and note that $|t_j|\to 0$.

{\it Step 3.} We consider the fibres $\XX_{t_d}$ of the degenerate twistorial family.
These fibres are complex K3 surfaces, in particular they are K\"ahler.
By our construction, $\mathrm{NS}(\XX_{t_d})$ contains $\Lambda_d$. The rank of $\mathrm{NS}(\XX_{t_d})$ cannot be bigger than 2 by our choice of $p$
made in Step 1. Hence $\mathrm{NS}(\XX_{t_d}) \simeq \Lambda_d$.
Since $\Lambda_d$ contains positive elements, $\XX_{t_d}$ are projective.
The discriminants of $\Lambda_d$ are different for different $d$, hence $\XX_{t_d}$ are pairwise
non-birational. Now it remains to set $\XX = \XX_{U,0}$, $\YY_j = \XX_{U,t_j}$,
where $U \simeq \bbA^1\subset \bbP^1$, and apply
Theorem \ref{thm_main}.
\end{proof}

\subsection{Local projectivity of Lagrangian fibrations}

We will sharpen the result of Campana \cite{Ca} about local projectivity of
Lagrangian fibrations. Let $\pi\colon X\to S$ be a Lagrangian fibration
with $X$ compact hyperk\"ahler and $S$ projective. It was shown in \cite{Ca} that any fibre $F$
of $\pi$ admits an open analytic neighbourhood $\VV$, such that $\pi|_\VV$
is a projective morphism.

\begin{cor}\label{cor_2}
In the above setting, $F$ admits a Zariski-open neighbourhood $\VV$
that is a quasi-projective variety and such that $\pi|_\VV$ is a projective
morphism.
\end{cor}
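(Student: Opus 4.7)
The plan is to construct $\VV$ as the preimage of the complement of a hyperplane section of $S$, and then transfer a quasi-projective structure from a projective fibre of the degenerate twistorial deformation via Theorem~\ref{thm_main}. Let $s = \pi(F)$ and fix a projective embedding $\iota\colon S \hookrightarrow \bbP^m$. Choose a hyperplane $H \subset \bbP^m$ with $s \notin H$, and set $U = S \setminus H$ and $\VV = \pi^{-1}(U)$. Then $U$ is a Zariski-open affine neighbourhood of $s$ and $\VV$ is a Zariski-open neighbourhood of $F$. Form the degenerate twistorial deformation $\XX_t$ with respect to $\eta = \iota^*\omega_{FS}$. By Theorem~\ref{thm_main}, $\VV = \XX_{U,0}$ is biholomorphic to $\XX_{U,t}$ for every $t \in \bbC$; moreover, the proof of Theorem~\ref{thm_Moser_II} shows that the vector field $V_t$ generating the Moser flow is tangent to the fibres of $\pi$, so the biholomorphism commutes with $\pi$.

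The next step is to exhibit $t_0 \in \bbC$ for which $\XX_{t_0}$ is projective. The periods $[\Omega_t] = [\Omega] + t\ell$, where $\ell = [\pi^*\eta]$, trace a complex affine line in the period domain. The class $\ell$ is of Hodge type $(1,1)$ for every $t$ but has BBF square zero, so projectivity requires an additional algebraic class. Arguing as in Steps 2--3 of the proof of Corollary~\ref{cor_1}, one uses Huybrechts' projectivity criterion together with primitive embeddings of rank-two hyperbolic lattices into $H^2(X,\bbZ)$ to produce a dense countable set of $t$ (in a neighbourhood of $0$ where $\XX_t$ stays Kähler by Kodaira--Spencer stability) for which $\mathrm{NS}(\XX_t)$ contains a hyperbolic plane, hence a class of positive BBF square; any such $t_0$ gives a projective $\XX_{t_0}$.

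For this $t_0$, the variety $\XX_{U,t_0}$ is Zariski-open in the projective variety $\XX_{t_0}$, hence quasi-projective, and $\pi|_{\XX_{U,t_0}}\colon \XX_{U,t_0} \to U$ is projective as the restriction over a Zariski-open subset of the base. Transferring the algebraic structure along the biholomorphism $\VV \xrightarrow{\sim} \XX_{U,t_0}$ endows $\VV$ with a quasi-projective structure for which $\pi|_\VV$ is projective. The main obstacle is producing the projective fibre $\XX_{t_0}$: since $X$ itself need not be projective one cannot simply take $t_0 = 0$, and the lattice-theoretic density argument above is the essential additional input beyond Theorem~\ref{thm_main}.
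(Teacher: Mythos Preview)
Your overall strategy coincides with the paper's: choose a hyperplane section of $S$ avoiding $\pi(F)$, take $\VV$ to be the preimage of its complement, find a projective fibre $\XX_{t_0}$ of the degenerate twistorial deformation, and invoke Theorem~\ref{thm_main} to identify $\VV$ with the quasi-projective $\XX_{U,t_0}$ over $U$. Your remark that the Moser flow is fibrewise (so the resulting biholomorphism commutes with $\pi$) is correct and makes explicit a point the paper leaves implicit.

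The gap is in your production of the projective fibre. You appeal to ``Steps~2--3 of the proof of Corollary~\ref{cor_1}'' and to primitive embeddings of hyperbolic planes into $H^2(X,\bbZ)$. That argument is specific to K3 surfaces: it uses that $H^2$ is the even unimodular lattice of signature $(3,19)$ and the embedding theorem from \cite{PS}. For a general hyperk\"ahler $X$ the BBF lattice need not be unimodular, and the cited result does not apply. Moreover, even in the K3 case the argument in Corollary~\ref{cor_1} produces a sequence $t_d$ with no control on $|t_d|$; it does not yield values of $t$ accumulating at $0$, which you need in order to guarantee (via Kodaira--Spencer stability) that $\XX_{t_0}$ is K\"ahler and hence that Huybrechts' criterion applies. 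So the claimed ``dense countable set of $t$ near $0$'' is not established by the reference you give.

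The paper handles this step far more simply and in full generality: pick any rational class $x\in H^2(X,\bbQ)$ with $q(x)>0$ and $q(\ell,x)\neq 0$, chosen (by density of $\bbQ$ in $\bbR$) close to a real $(1,1)$-class so that $t=-q([\Omega],x)/q(\ell,x)$ is arbitrarily small. Then $x\in[\Omega_t]^\perp$, so $x\in\mathrm{NS}(\XX_t)$ has positive BBF square, and $\XX_t$ is projective. No lattice embeddings or hyperbolic planes are needed; a single positive rational class suffices.
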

\begin{proof}
Since $S$ is projective, we can find a K\"ahler form $\eta\in \Lambda^{1,1}S$
whose cohomology class is rational. Let $\ell = [\pi^*\eta]\in H^2(X,\bbQ)$.
As before, we denote by $\Omega$ the holomorphic symplectic form on $X$.

Consider the degenerate twistorial deformation of $X$ determined by $\eta$.
Using the density of $H^2(X,\bbQ)$ in $H^2(X,\bbR)$, we can find a class $x\in H^2(X,\bbQ)$
with $q(x) > 0$ and $t = - q([\Omega],x)/q(\ell,x)$ arbitrary close to zero.
Then for the fibre $\XX_t$ of the degenerate twistorial family we have $[\Omega_t] = [\Omega] +t\ell$,
$x\in [\Omega_t]^\perp$, hence $\mathrm{NS}(\XX_t)$ contains an element with positive BBF square.
Since $|t|$ is small, $\XX_t$ is K\"ahler, and hence projective by Huybrechts's criterion \cite[Proposition 26.13]{Hu1}.
Pick a hyperplane section $H$ of $S$ that does not contain $\pi(F)$,
let $U = S\setminus H$ and $\VV = \pi^{-1}(U)$. Then apply Theorem \ref{thm_main}.
\end{proof}

\begin{rem}
This corollary gives, in particular, an alternative proof of the well known fact
that all fibres of $\pi$ are projective varieties.
\end{rem}

\begin{rem}
If $X$ is not projective, it can not be Moishezon, because being K\"ahler and
Moishezon implies being projective. Using the open subsets $\VV$ from Corollary \ref{cor_2},
we get an example of a compact complex manifold that admits a covering by
Zariski-open quasi-projective subsets, but is not Moishezon.
\end{rem}

\subsection{Stein cells in hyperk\"ahler manifolds with Lagrangian fibrations}\label{sec_Stein}

It was asked by Bogomolov (see \cite{B2}) whether any compact complex manifold $X$
admits a ``Stein cell''. By this one means an open subset $\VV\subset X$ that is
a Stein manifold, and such that the complement $X\setminus \VV$ is ``small''.
Ideally the complement of a Stein cell should be a finite CW complex whose dimension
is strictly smaller than the dimension of $X$. If $X$ is projective, one can
take for $\VV$ the complement of an ample divisor, which is an affine variety, hence Stein.
For non-algebraic varieties the question is more subtle, and no general construction
is known, even conjecturally. The condition on the complement of a Stein cell stated above
is quite restrictive, and it is debatable how realistic it is. We observe here that if one weakens
this condition, then Theorem \ref{thm_main} provides a way to construct Stein
cells in arbitrary (possibly non-algebraic) hyperk\"ahler manifolds admitting
a Lagrangian fibration.

\begin{cor}\label{cor_Stein}
Let $\pi\colon X\to S$ be a Lagrangian fibration with $X$ compact hyperk\"ahler
and $S$ projective. There exists an open subset $\VV\subset X$ that is Stein and
dense in the analytic topology.
\end{cor}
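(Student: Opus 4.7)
The plan is to transplant into $X$, via Theorem \ref{thm_main}, an affine Zariski open subset of a suitable projective fiber $\XX_{t_{0}}$ of the degenerate twistorial family. Following the construction in the proof of Corollary \ref{cor_2}, I would fix a projective embedding $\iota\colon S \hookrightarrow \bbP^{m}$, set $\eta = \iota^{*}\omega_{FS}$, and form the degenerate twistorial deformation $\XX_{t} = (X, I_{t})$. The density of rational classes of positive BBF square in the positive cone, combined with Huybrechts's projectivity criterion, then produces some $t_{0}\in \bbC$ for which $\XX_{t_{0}}$ is projective; this is the input one borrows from Corollary \ref{cor_2}. Next I would choose any hyperplane $H\subset \bbP^{m}$ and set $U = S\setminus (S\cap H)$, which is affine as a closed subvariety of $\bbP^{m}\setminus H$.

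The key geometric step is to manufacture on $\XX_{t_{0}}$ an effective ample divisor whose support contains $\pi^{-1}(H)$. Picking any effective ample divisor $A$ on $\XX_{t_{0}}$, and noting that $\pi^{*}H$ is nef on $\XX_{t_{0}}$ (pullback under $\pi\colon \XX_{t_{0}}\to S$ of the ample divisor $H$), the divisor $D = A + \pi^{*}H$ is effective and ample. Consequently
\[
\XX_{t_{0}}\setminus \Supp(D) \;=\; \XX_{U,t_{0}}\setminus \bigl(\Supp(A)\cap \XX_{U,t_{0}}\bigr)
\]
is affine, hence Stein as a complex manifold.

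Finally, I would invoke Theorem \ref{thm_main} to obtain a biholomorphism $\varphi\colon \XX_{U,0}\stackrel{\sim}{\to} \XX_{U,t_{0}}$ and transport $\Supp(A)\cap \XX_{U,t_{0}}$ back along $\varphi$ to a closed analytic subvariety $B\subset \XX_{U,0}$. The open set $\VV = \XX_{U,0}\setminus B \subset X$ is then biholomorphic to the Stein manifold above, hence itself Stein; its density in $X$ follows from the density of $U$ in $S$ (so $\XX_{U,0}$ is dense in $X$) together with the fact that $B$ is a proper closed analytic subset of the connected manifold $\XX_{U,0}$. The only substantive step is producing the projective fiber $\XX_{t_{0}}$, which is already handled in the proof of Corollary \ref{cor_2}; the rest is a direct application of Theorem \ref{thm_main} combined with the classical fact that the complement of an effective ample divisor in a projective variety is affine.
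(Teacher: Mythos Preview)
Your proof is correct and follows essentially the same route as the paper's: pass to a projective fibre $\XX_{t_0}$ of the degenerate twistorial family, take an ample divisor whose support contains $\pi^{-1}(H)$ so that its complement is affine inside $\XX_{U,t_0}$, and then pull this affine (hence Stein) open set back to $X_U$ via the biholomorphism of Theorem \ref{thm_main}. The only cosmetic difference is that the paper writes the ample divisor as $mA+Z$ with $Z=\pi^{-1}(H)$ and $m\gg 0$, whereas you use $A+\pi^{*}H$ directly (ample plus nef is ample); both yield the same conclusion.
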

\begin{proof}
Consider the degenerate twistorial deformation of $X$ as in the proof of Corollary \ref{cor_2}
and let $\XX_t$ be a projective fibre. Let $H\subset S$ be a hyperplane section,
$U = S\setminus H$ and $\varphi\colon X_U\to \XX_{U,t}$ the isomorphism of complex manifolds
constructed in Theorem \ref{thm_main}. Let $Z = \pi^{-1}(H)\subset \XX_t$. For any ample
divisor $A\subset \XX_t$ the divisor $mA+Z$ is ample for $m$ big enough. Hence
$\VV' = \XX_{U,t}\setminus A$ is an affine variety. Let $\VV = \varphi^{-1}(\VV')$, then
$\VV$ is an open dense Stein subset of $X$.
\end{proof}

\begin{rem}
In the notation of the proof above, note that the closure of $\varphi^{-1}(A\cap \XX_{U,t})\subset X_U$
may not be a complex-analytic subset of $X$. So the complement of $\VV$ constructed above
may be more complicated than a CW complex.
\end{rem}

\subsection{Higher direct images of the structure sheaf}

Let $\pi\colon X\to S$ be a Lagrangian fibration
with $X$ compact hyperk\"ahler and $S$ a smooth projective variety. In the case when $X$ is projective,
Matsushita has shown in \cite{Ma3} that $R^j\pi_*\OO_X \simeq \Omega^j_S$. One can deduce the same result for non-projective $X$
using the theory of Saito \cite{Sa}. We show how to deduce the non-projective case
directly from \cite{Ma3} without using the complicated techniques from \cite{Sa}.

\begin{cor}\label{cor_3} In the above setting $R^j\pi_*\OO_X \simeq \Omega^j_S$ for all $j$.
\end{cor}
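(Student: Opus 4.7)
The plan is to prove the isomorphism Zariski-locally on $S$ via the local projectivity result Corollary \ref{cor_2}, and then to patch using the canonical nature of Matsushita's isomorphism.

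For the local reduction, given a point $s \in S$, the construction in the proof of Corollary \ref{cor_2} produces a hyperplane section $H \subset S$ with $s \notin H$, a value $t \in \bbC$ for which $\XX_t$ is projective hyperk\"ahler, and---via the Moser isotopy of Theorem \ref{thm_Moser_II}---an isomorphism of complex manifolds $\varphi \colon X_U \stackrel{\sim}{\longrightarrow} \XX_{U,t}$ over $U := S \setminus H$ satisfying $\varphi^*\Omega_t = \Omega$. Matsushita's theorem \cite{Ma3} applied to the projective Lagrangian fibration $\pi_t \colon \XX_t \to S$ yields $R^j(\pi_t)_*\OO_{\XX_t} \simeq \Omega^j_S$; restricting to $U$ and transporting along $\varphi$ gives
\[ R^j\pi_*\OO_X\bigr|_U \;\simeq\; R^j(\pi_t)_*\OO_{\XX_t}\bigr|_U \;\simeq\; \Omega^j_U . \]
Here I use that higher direct images commute with pullback along the open immersion $U \hookrightarrow S$, and that $\varphi$ commutes with the projections to $U$.

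Finitely many such opens cover $S$, so we obtain the isomorphism on a Zariski cover. To globalise it, I would invoke the canonicity of Matsushita's construction: the isomorphism is induced by contraction with the holomorphic symplectic form, which via the Lagrangian condition yields the identification $T_{X/S} \simeq \pi^*\Omega^1_S$ on the smooth locus of $\pi$, and hence an intrinsic morphism $\Omega^j_S \to R^j\pi_*\OO_X$ that makes sense for any Lagrangian fibration with a holomorphic symplectic form, regardless of whether the total space is projective. The local argument above shows this morphism is an isomorphism on each $U$; being a local property, it is an isomorphism globally.

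I expect the principal obstacle to lie in the patching step: one has to verify that the isomorphism produced in \cite{Ma3} coincides, after the identifications, with the intrinsic map described above, so that compatibility under the $\pi$-equivariant symplectomorphisms $\varphi$ (automatic from $\varphi^*\Omega_t = \Omega$) forces the local isomorphisms to agree on overlaps. Once this naturality is recorded, the corollary follows at once.
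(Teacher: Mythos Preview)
Your overall strategy matches the paper's: reduce to the projective case via the Moser isotopy and apply \cite{Ma3} on each chart $U$. The place where your sketch is incomplete is exactly the one you flag, but your proposed resolution of it is not quite right.

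You assert that Matsushita's isomorphism is ``induced by contraction with the holomorphic symplectic form'' and hence intrinsic. In \cite{Ma3} the construction on the smooth locus has two ingredients: contraction with $\Omega$ gives $T_{S^\circ}\simeq \pi^\circ_*\Omega^1_{X^\circ/S^\circ}$, but to pass from there to $R^1\pi^\circ_*\OO_{X^\circ}\simeq \Omega^1_{S^\circ}$ one uses the \emph{polarization} on the VHS $R^1\pi^\circ_*\bbC$, which comes from a K\"ahler (or ample) class on the total space, not from $\Omega$. So the map is not a priori determined by $\Omega$ alone, and the symplectomorphism $\varphi^*\Omega_t=\Omega$ does not by itself force the local isomorphisms to agree on overlaps.

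The paper handles this as follows. First, one fixes a single projective fibre $\XX_t$ and a single ample bundle $L$ on it, and uses the \emph{same} $t$ for every chart $U$ (only the hyperplane $H$ varies). Second, one invokes Matsushita's lemma that for a smooth fibre $F$ the restriction $H^2(X,\bbR)\to H^2(F,\bbR)$ has rank one with kernel $\ell^\perp$; hence the induced polarization on $R^1\pi^\circ_*\bbC$ depends only on the number $q(\ell,[\omega])$. After rescaling $\omega$ so that $q(\ell,[\omega])=q(\ell,c_1(L))$, the K\"ahler class on $X$ and the ample class on $\XX_t$ define the same polarization on the VHS over $S^\circ$, and therefore the local isomorphisms all restrict to one and the same isomorphism over $S^\circ$. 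This is what makes them glue. Your write-up should incorporate this polarization-matching step rather than appealing to a canonicity that does not hold in the form you stated.
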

\begin{proof}
We let $\eta$ and $\ell$ be as in the proof of Corollary \ref{cor_2},
and we pick a projective fibre $\XX_t$ in the degenerate twistorial deformation
of $X$ as in that proof. We let $\omega\in \Lambda^{1,1}X$ be a K\"ahler form
and $L\in\mathrm{Pic}(\XX_t)$ be an ample line bundle.

Let $\pi^\circ\colon X^\circ\to S^\circ$ be the smooth part of the fibration $\pi$,
as in section \ref{sec_Lag}. We recall from \cite[section 2]{Ma3} the construction
of the isomorphism $R^1\pi^\circ_*\OO_{X^\circ} \simeq \Omega^1_{S^\circ}$.
Consider the isomorphism $T_{X^\circ}\stackrel{\sim}{\to} \Omega^1_{X^\circ}$
induced by the symplectic form $\Omega$. Compose this isomorphism with 
the natural surjection $\Omega^1_{X^\circ} \to \Omega^1_{X^\circ/S^\circ}$
and note that this composition vanishes on the subbundle $T_{X^\circ/S^\circ}$,
because the fibres of $\pi^\circ$ are Lagrangian. It therefore induces
an isomorphism $(\pi^\circ)^*T_{S^\circ}\stackrel{\sim}{\to} \Omega^1_{X^\circ/S^\circ}$.
Projecting it to $S$ we get an isomorphism $T_{S^\circ}\stackrel{\sim}{\to} \pi^\circ_*\Omega^1_{X^\circ/S^\circ}$.
We have an exact triple
$$
0\lrarr \pi^\circ_*\Omega^1_{X^\circ/S^\circ} \lrarr (R^1\pi^\circ_*\bbC)\otimes \OO_{S^\circ}\lrarr R^1\pi^\circ_*\OO_{X^\circ}\lrarr 0,
$$
and the bundle in the middle carries a polarized variation of Hodge structures, the
polarization being induced by the restriction of the K\"ahler class $[\omega]$ to the fibres of $\pi^\circ$.
Using the polarization, we get an isomorphism $R^1\pi^\circ_*\OO_{X^\circ} \simeq (\pi^\circ_*\Omega^1_{X^\circ/S^\circ})^\vee$
and hence
\begin{equation}\label{eqn_iso}
R^1\pi^\circ_*\OO_{X^\circ} \simeq \Omega^1_{S^\circ}.
\end{equation}

From the construction we see that the isomorphism (\ref{eqn_iso}) depends
on the class of the polarization $[\omega]\in H^2(X,\bbR)$. Recall, however,
that for any fibre $F$ of $\pi^\circ$ the restriction map $H^2(X,\bbR) \to H^2(F,\bbR)$
has rank one and its kernel is the orthogonal complement of $\ell$, see \cite[Lemma 2.2]{Ma4}.
The polarization on the fibres therefore depends only
on the value of $q(\ell,[\omega])$, and not on $[\omega]$ itself.
We can rescale $\omega$ so that $q(\ell,[\omega]) = q(\ell,c_1(L))$, where $L$ is
the ample bundle on $\XX_t$ chosen above. Then $[\omega]$ and $c_1(L)$ induce the same
polarization on the VHS $(R^1\pi^\circ_*\bbC)\otimes \OO_{S^\circ}$ and thus
the same isomorphism (\ref{eqn_iso}).

We need to show that (\ref{eqn_iso}) extends over the discriminant locus of $\pi$.
We choose an arbitrary hyperplane section of $S$ in some projective embedding
and denote by $U\subset S$ its open complement. We let $U^\circ = U\cap S^\circ$ and 
$X_U = \pi^{-1}(U)$. Theorem \ref{thm_main}
implies that the complex manifolds $X_U$ and $\XX_{U,t}$ are isomorphic.
Applying the results of \cite{Ma3} to $\XX_t$ and using the line bundle $L$
to define the polarization, we get an isomorphism
$R^1\pi_*\OO_{X_U} \simeq R^1\pi_*\OO_{\XX_{U,t}} \simeq \Omega^1_{U}$.
When restricted to $U^\circ$, this isomorphism coincides with the restriction of (\ref{eqn_iso}),
because of the remark about polarizations in the previous paragraph.
Therefore we can extend (\ref{eqn_iso}) to an isomorphism over $U$. Since $U$
was arbitrary, we conclude that (\ref{eqn_iso}) extends to an isomorphism 
$R^1\pi_*\OO_{X} \simeq \Omega^1_{S}$.

To finish the proof, we use the isomorphisms $\Lambda^j(R^1\pi_*\OO_X) \simeq R^j\pi_*\OO_X$
from \cite{Ma3}. These isomorphisms may be checked locally on $S$, so we again replace
$X_U$ by $\XX_{U,t}$ as above and apply the results of \cite{Ma3}.
\end{proof}

\subsection{Deformation invariance of contraction centres}\label{sec_contr}
Consider a smooth fa\-mily $\pi\colon \XX\to \Delta$
of holomorphic symplectic manifolds over the unit disc. Assume that $\ZZ\subset \XX$
is a subvariety that is flat over $\Delta$ and such that $\pi|_{\ZZ}$ is proper,
i.e. the fibres $\ZZ_t$ are compact subvarieties of $\XX_t$.
Assume that for every $t\in \Delta$ the subvariety $\ZZ_t$ can be contracted to a point.
More precisely, there exists a flat family $\pi'\colon \YY\to \Delta$
and a proper morphism $\rho\colon \XX\to \YY$ such that $\pi'\circ\rho = \pi$
and $\rho_t\colon \XX_t \to \YY_t$ is a bimeromorphic morphism with exceptional set $\ZZ_t$,
and such that $\rho_t(\ZZ_t)$ is a point in $\YY_t$. We call $\ZZ_t$ the contraction centre of $\rho_t$.

We would like to use Moser's isotopy to show that all $\ZZ_t$ are isomorphic,
the isomorphisms being induced by symplectomorphisms of some open neighbourhoods of $\ZZ_t$.
Every $\ZZ_t$ is a deformation retract of its open neighbourhood in $\XX_t$,
see e.g. \cite[Proposition 3.5]{Qu}. By shrinking $\XX$ we may assume that
every fibre $\ZZ_t$ is a deformation retract of $\XX_t$.
Moreover, since every point in $\YY_t$ admits a Stein neighbourhood, by shrinking $\YY$ we
may assume that all $\YY_t$ are Stein spaces. Therefore we are reduced to the
following statement.

\begin{thm}\label{thm_contr}
Let $X$ be a manifold with a family of C-symplectic structures $\Omega_t\in \Lambda^2_\bbC X$,
$t\in [0,1]$ and corresponding complex structures $I_t$. Denote $\XX_t = (X,I_t)$ and
assume that $\ZZ_t\subset \XX_t$ is a family of compact complex subvarieties such that:
\begin{enumerate}
\item $\ZZ_t$ is a deformation retract of $X$;
\item $\XX_t$ admits a bimeromorphic morphism $\rho_t$ onto a Stein space $\YY_t$ such that
the exceptional set of $\rho_t$ is $\ZZ_t$.
\end{enumerate}
Then there exist open neighbourhoods $U_0$ of $\ZZ_0$, $U_1$ of $\ZZ_1$ and a diffeomorphism
$\varphi\colon U_0 \to U_1$ such that $\varphi^*(\Omega_1|_{U_1}) = \Omega_0|_{U_0}$ and $\varphi(\ZZ_0) = \ZZ_1$.
\end{thm}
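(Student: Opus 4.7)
The strategy is to apply Corollary \ref{cor_Moser_compact} to the family of C-symplectic forms $\Omega_t$ on the fixed $C^\infty$ manifold $X$, regarded as a smooth, $C^\infty$-trivial family of holomorphic symplectic manifolds $(\XX_t, \Omega_t)$ parametrized by $t\in [0,1]$. The corollary will provide, near each $t_0$, a Moser-type local trivialization on an open neighbourhood of $\ZZ_{t_0}$; compactness of $[0,1]$ then lets us cover the interval by finitely many such and compose the resulting holomorphic symplectomorphisms to obtain the required diffeomorphism $\varphi$.

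To invoke the corollary we verify its two hypotheses. For $H^1(\XX_t, \OO_{\XX_t}) = 0$: the map $\rho_t\colon \XX_t \to \YY_t$ is a proper bimeromorphic morphism from a smooth manifold with trivial canonical bundle, so Grauert--Riemenschneider gives $R^i(\rho_t)_*\OO_{\XX_t} = 0$ for $i > 0$; combined with Cartan's Theorem B on the Stein space $\YY_t$ and the Leray spectral sequence, this yields the desired vanishing. For the constancy of $[\Omega_t]\in H^2(X,\bbC)$, we prove the stronger claim that $[\Omega_t] = 0$ for every $t$. Since $\ZZ_t$ is a deformation retract of $X$, it suffices to show $\int_\gamma \Omega_t = 0$ for any smooth 2-cycle $\gamma$ in $\ZZ_t$. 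By transversality, $\gamma$ may be chosen to meet the singular locus $\ZZ_t^{\sing}$ (of real codimension at least $2$) only in a set of measure zero. As $\rho_t$ contracts $\ZZ_t$ to a point, tangent vectors to $\ZZ_t^{\sm}$ lie in $\ker d\rho_t$, forcing $\Omega_t|_{\ZZ_t^{\sm}}$ to vanish pointwise, so the integral over the measure-zero complement contributes nothing and $\int_\gamma \Omega_t = 0$; hence $[\Omega_t] = 0$.

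Applying Corollary \ref{cor_Moser_compact} with $K = \ZZ_{t_0}$ for each $t_0\in [0,1]$ and composing the resulting local-in-$t$ holomorphic symplectomorphisms over a finite cover of $[0,1]$ produces a biholomorphism $\varphi\colon (U_0, I_0)\to (U_1, I_1)$ of open neighbourhoods of $\ZZ_0$ and $\ZZ_1$ with $\varphi^*\Omega_1 = \Omega_0$. To show $\varphi(\ZZ_0)=\ZZ_1$, we shrink each Stein target $V_t\subset \YY_t$ so that it contains only the single singular value $\rho_t(\ZZ_t)$, and set $U_t = \rho_t^{-1}(V_t)$. Any compact positive-dimensional analytic subvariety of $U_t$ is contracted by $\rho_t$ to a discrete subset of the Stein space $V_t$, hence to $\rho_t(\ZZ_t)$, and is therefore contained in $\ZZ_t$. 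Since $\varphi(\ZZ_0)$ is such a subvariety of $(U_1, I_1)$, $\varphi(\ZZ_0)\subset \ZZ_1$, and applying the same argument to $\varphi^{-1}$ gives equality. The main obstacle I anticipate lies in the transversality step establishing $[\Omega_t]=0$: since $\ZZ_t$ may carry a Whitney stratification with several strata of various codimensions, one must represent homology classes by smooth chains transversal to each stratum, which is standard but requires careful setup.
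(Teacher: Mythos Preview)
Your overall architecture matches the paper's: verify $H^1(\XX_t,\OO_{\XX_t})=0$ and $[\Omega_t]=0$, then run the Moser trick and use that $\ZZ_t$ is the unique positive-dimensional compact subvariety of $\XX_t$. The cohomology vanishing via Grauert--Riemenschneider, Leray and Cartan~B is exactly the paper's ``rational singularities $+$ Stein'' argument, and your identification $\varphi(\ZZ_0)=\ZZ_1$ is the same as theirs.

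The genuine gap is in your argument for $[\Omega_t]|_{\ZZ_t}=0$. You write that tangent vectors to $\ZZ_t^{\sm}$ lie in $\ker d\rho_t$ and that this \emph{forces} $\Omega_t|_{\ZZ_t^{\sm}}=0$. That implication is not valid: it would follow if $\Omega_t=\rho_t^*\omega$ for some $2$-form $\omega$ on $\YY_t$, but $\YY_t$ is singular precisely at $\rho_t(\ZZ_t)$, so no such smooth form exists there, and there is no a~priori relation between $\ker d\rho_t$ and isotropy for $\Omega_t$. The fact that fibres of a symplectic resolution are isotropic (equivalently, that $[\Omega_t]$ restricts to zero on the exceptional locus) is a theorem, not a tautology; the paper invokes Kaledin \cite[Corollary~2.8]{Ka} for exactly this step. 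So the obstacle is not the transversality issue you anticipated, but the unjustified isotropy claim itself.

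A secondary remark: your plan to cover $[0,1]$ by finitely many intervals and compose the local-in-$t$ symplectomorphisms from Corollary~\ref{cor_Moser_compact} works, but you must invoke the ``unique compact subvariety'' argument at each intermediate step, not only at the end, in order to know that the image of $\ZZ_{t_i}$ lands inside the next domain and the composition can be formed. The paper avoids this bookkeeping by applying Theorem~\ref{thm_Moser_I}(1) once to produce the vector fields $V_t$ on all of $[0,1]$ and then integrating directly, using at each $t$ that $\varphi_t(\ZZ_0)=\ZZ_t$ is compact to keep the flow alive.
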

\begin{proof}
Since $\XX_t$ are holomorphic symplectic manifolds, $\YY_t$ are holomorphic symplectic
varieties in the sense of Beauville \cite{Be}. It is known \cite[Proposition 1.3]{Be} that the singularities
of $\YY_t$ are rational, and since $\YY_t$ are Stein, $H^j(\XX_t,\OO_{X_t}) = 0$ for all $j>0$.

As before, we denote by $[\Omega_t]$ the cohomology class of $\Omega_t$ in $H^2(X,\bbC)$.
We know from \cite[Corollary 2.8]{Ka} that $[\Omega_t]|_{\ZZ_t} = 0$. Since $\ZZ_t$ is a deformation
retract of $X$, we have $[\Omega_t] = 0$ for all $t$. Hence we can apply part (1) of Theorem \ref{thm_Moser_I}
and obtain a family of vector fields $V_t$.

Since $X$ is not compact, it is not possible, in general, to integrate $V_t$ to a flow of
diffeomorphisms. However, since $\ZZ_0$ is compact, we can integrate $V_t$ in an open
neighbourhood $U_0$ of $\ZZ_0$ for small $t$. More precisely, there exist diffeomorphisms
$\varphi_t$ of $U_0$ onto some open subsets of $X$ integrating $V_t$ and defined for
$t$ sufficiently close to zero. Note that $\varphi_t^*\Omega_t = \Omega_0|_{U_0}$ and
$\varphi_t(\ZZ_0)$ is a compact complex subvariety of $\XX_t$. Since $\XX_t$ is contractible
onto a Stein space $\YY_t$ with contraction centre $\ZZ_t$, the only positive dimensional
compact subvariety of $\XX_t$ is $\ZZ_t$. Hence $\varphi_t(\ZZ_0) = \ZZ_t$, and since all $\ZZ_t$ are compact,
we can extend the flow for all $t\in [0,1]$, after possibly shrinking $U_0$. Then
we define $U_1 = \varphi_1(U_0)$ and $\varphi = \varphi_1$.
\end{proof}

\medskip

{\bf Acknowledgements.} A.S. is grateful to Daniel Huybrechts for useful comments
on an earlier draft of this paper. Both authors are
grateful to Anna Abasheva and Vasily Rogov for
interesting discussions on the subject.

\end{document}